\DeclarePairedDelimiter\abs{\lvert}{\rvert}
\DeclarePairedDelimiter\norm{\lVert}{\rVert}
\let\oldabs\abs
\def\abs{\@ifstar{\oldabs}{\oldabs*}}
\let\oldnorm\norm
\def\norm{\@ifstar{\oldnorm}{\oldnorm*}}
\newcommand{\leg}[2]{\genfrac{(}{)}{}{}{#1}{#2}}
\newtheorem{theorem}{Theorem}
\newtheorem{lemma}[theorem]{Lemma}
\newtheorem{corollary}[theorem]{Corollary}
\newtheorem{proposition}[theorem]{Proposition}
\theoremstyle{definition}
\newtheorem{definition}[theorem]{Definition}
\theoremstyle{remark}
\newtheorem*{remark}{Remark}
\newtheorem*{example}{Example}
\numberwithin{theorem}{section}
\numberwithin{proposition}{section}
\numberwithin{lemma}{section}
\numberwithin{corollary}{section}
\numberwithin{equation}{section}
\numberwithin{conjecture}{section}
\numberwithin{definition}{section}
\setlist[enumerate,1]{before=}
\newcommand{\N}{\mathbb{N}}
\newcommand{\Z}{\mathbb{Z}}
\newcommand{\R}{\mathbb{R}}
\newcommand{\C}{\mathbb{C}}
\newcommand{\Q}{\mathbb{Q}}
\newcommand{\HH}{\mathbb{H}}
\author{Walter Bridges, Johann Franke, and Johann Stumpenhusen}
\title{On the Proportion of Coprime Fractions in Number Fields}
\address{Department of Mathematics and Computer Science, Division of Mathematics, University of Cologne, Weyertal 86-90, 50931 Cologne, Germany}
\email{wbridges@uni-koeln.de}
\email{jfrank12@uni-koeln.de}
\email{jstumpen@math.uni-koeln.de}
\keywords{class group, density, Hecke $L$-function, Heegner points}
\begin{document}

\maketitle

\begin{abstract}
    We determine the asymptotic density of coprime fractions in those of the reduced fractions of number fields. When ordered by norms of denominators, we count a fraction as soon as it ``appears'' for the first time and no later.  The natural density of coprime fractions in the set of reduced fractions may then be computed using well-known facts about Hecke $L$-functions. 
 Furthermore, we draw some connections to the modular group and Heegner points.
\end{abstract}

\section{Introduction}

\subsection{Motivation.} In the field of fractions of an integral domain, it is natural to pick a unique representative for each equivalence class of tuples that represent the same fraction. In a unique factorization domain (UFD), the following definition provides the typical characteristic which such representatives are required to fulfill.

\begin{definition}
    Let $R$ be an integral domain and $K$ its field of fractions, $a \in R$ and $b \in R \setminus \{0\}$. We call the fraction $\frac{a}{b}$ {\itshape reduced} if there exists no $c \in R \setminus R^\times$ such that $c \mid a$ and $c \mid b$.
\end{definition}

In the rational numbers, but also in all other number fields with class number 1, the terms reduced and coprime coincide, in the sense that a fraction is reduced if and only if numerator and denominator are coprime. However, this is no longer true for number fields with higher class number, so not every reduced fraction is already coprime. For example, the fraction
\[\frac{1 + \sqrt{-5}}{2}\]
is reduced in the field $\Q(\sqrt{-5})$ but not coprime since the ideal $\langle 2, 1 + \sqrt{-5}\rangle$ does not equal $\Z[\sqrt{-5}]$. Moreover, non-coprime reduced fractions appear to lack a unique representation, as may be seen by the example
\[\frac{1 + \sqrt{-5}}{2} = \frac{3}{1 - \sqrt{-5}}.\]
This ambiguity arises from $\Z[\sqrt{-5}]$ not being a UFD and $6 = 2 \cdot 3 = (1 + \sqrt{-5})(1 - \sqrt{-5})$.

\subsection{Extending the notion of density} Note that $\mathbb{Q}$ may be identified with
$$
\{(m,n)\in \mathbb{Z} \times \mathbb{N} : \gcd(m,n)=1\}.
$$
To study subsets of the rationals that are invariant under multiplication by a unit (i.e. $\pm 1$) and translation by an integer, it suffices to study subsets of
\begin{equation}\label{eq:DefIQ}
\{(m,n)\in \nicefrac{\Z}{n\Z} \times \mathbb{N} : \gcd(m,n)=1 \} =: \mathcal{I}.
\end{equation}
Classical definitions of natural and Dirichlet density for subsets of $\mathbb{N}$ may then be extended to subsets of $\mathcal{I}$ in a straightforward manner; see Section 2.1 for details.

Turning now to number fields, note that $K$ may be identified with
\[
\left\{(a,b)\in \mathcal{O}_K \times \nicefrac{\mathcal{O}_K \setminus \{0\}}{\mathcal{O}_K^\times} : \frac{a}{b} \textup{ is a reduced fraction.}\right\}.
\]
Similarly to the case of the rational numbers, we may define a density on subsets of
\begin{equation}\label{eq:DefI_K}
\left\{(a,b)\in \nicefrac{\mathcal{O}_K}{b\mathcal{O}_K} \times \nicefrac{\mathcal{O}_K \setminus \{0\}}{\mathcal{O}_K^\times} : \frac{a}{b} \textup{ is a reduced fraction.}\right\} =: \mathcal{I}_K.    
\end{equation}

Note that $\mathcal{I}_\Q = \mathcal{I}$.  We are then interested in the set of coprime reduced fractions, i.e.
\begin{equation}\label{eq:DefC_K}
    \mathscr{C}_K := \left\{(a,b)\in \mathcal{I}_K: \langle a, b \rangle = \mathcal{O}_K\right\} \subset \mathcal{I}_K.
\end{equation}
After extending the aforementioned notion of density to this setting, we will see that the density of $\mathscr{C}_K$ depends on a representative system of the class group of $K$.

\subsection{Main result.} In order to convey the property of being reduced to the ideals of a number field, we make the following definition.

\begin{definition}
    Let $K$ be a number field and $\mathfrak{a} \subset \mathcal{O}_K$ be an ideal. If the only principal ideal dividing $\mathfrak{a}$ is $\mathcal{O}_K$, we call $\mathfrak{a}$ \textit{inseverable}.
\end{definition}

With this in hand, we determine a way to uniquely choose a denominator for any reduced fraction in a number field.  Ordering elements of $\nicefrac{\mathcal{O}_K \setminus \{0\}}{\mathcal{O}_K^\times}$ by norm, we count a fraction as soon as it ``appears'' for the first time and no later.  The natural density of coprime fractions in the set of reduced fractions may then be computed using well-known facts about Hecke $L$-functions.

\begin{theorem} \label{theo:Densityestimate}
    The natural density of coprime fractions in a number field $K$ with respect to a set of inseverable representatives $\mathcal{R}_K$ of its class group equals $\frac{1}{\sum_{\mathfrak{g} \in \mathcal{R}_K} \frac{1}{(N\mathfrak{g})^2}}$.
\end{theorem}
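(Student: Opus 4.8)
The plan is to put $K$ in explicit bijection with a distinguished set of reduced representatives, rewrite the density as a ratio of ideal-counting sums, and evaluate those sums using the Dedekind zeta function and Hecke $L$-functions. For $x\in K^\times$ write $\langle x\rangle=\mathfrak a_x\mathfrak b_x^{-1}$ with $\mathfrak a_x,\mathfrak b_x$ coprime integral ideals and call $\mathfrak b_x$ the denominator ideal. Since $c\mid a$ and $c\mid b$ means $\langle a,b\rangle\subseteq\langle c\rangle$, the fraction $\tfrac ab$ is reduced exactly when $\langle a,b\rangle$ is inseverable. Given $x$, let $\mathfrak g=\mathfrak g(x)\in\mathcal R_K$ represent the class $[\mathfrak b_x]^{-1}$; then $\mathfrak b_x\mathfrak g$ and $\mathfrak a_x\mathfrak g$ are principal, and one checks that $x$ has a \emph{unique} reduced representation $\tfrac ab$ with $\langle a,b\rangle=\mathfrak g$ --- take $b$ a generator of $\mathfrak b_x\mathfrak g$ and $a=xb$, a generator of $\mathfrak a_x\mathfrak g\subseteq\mathcal O_K$, so that $\langle a,b\rangle=\mathfrak g\gcd(\mathfrak a_x,\mathfrak b_x)=\mathfrak g$. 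Call this the canonical representative; its denominator has norm $N(\mathfrak b_x)N(\mathfrak g)$. Since the only inseverable ideal in the principal class is $\mathcal O_K$, we have $\mathcal O_K\in\mathcal R_K$, and $x$ is a coprime fraction iff $[\mathfrak b_x]=1$ iff its canonical representative has $\langle a,b\rangle=\mathcal O_K$. Conversely every $(a\bmod b,b)\in\mathcal I_K$ with $\langle a,b\rangle\in\mathcal R_K$ is canonical for $x=\tfrac ab$, so $x\mapsto(\text{canonical representative})$ is a bijection from $K^\times$ onto $\{(a\bmod b,b)\in\mathcal I_K:\langle a,b\rangle\in\mathcal R_K\}$ taking coprime fractions to the part with $\langle a,b\rangle=\mathcal O_K$. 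The density in question is therefore $\lim_{X\to\infty}F_{\mathscr C}(X)/F(X)$, where $F(X)$ and $F_{\mathscr C}(X)$ count, respectively, all $x$ and the coprime $x$ whose canonical denominator has norm $\le X$.

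The next step is the elementary count: if $\mathfrak g\mathfrak m$ is principal with generator $b$, then $\#\{a\bmod b:\langle a,b\rangle=\mathfrak g\}=\phi(\mathfrak m)$, where $\phi$ is the ideal totient. This is a prime-by-prime computation via the Chinese Remainder Theorem: a prime $\mathfrak p\nmid\mathfrak m$ imposes a single congruence on $a$ (local factor $1$), while each prime power $\mathfrak p^{v_\mathfrak p(\mathfrak m)}$ with $\mathfrak p\mid\mathfrak m$ forces $v_\mathfrak p(a)=v_\mathfrak p(\mathfrak g)$ exactly, contributing $\phi(\mathfrak p^{v_\mathfrak p(\mathfrak m)})$; the factors multiply to $\phi(\mathfrak m)$. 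Writing the canonical denominator ideal as $\mathfrak g\mathfrak m$ (so $[\mathfrak m]=[\mathfrak g]^{-1}$), this gives
\[
F(X)=\sum_{\mathfrak g\in\mathcal R_K}\ \sum_{\substack{[\mathfrak m]=[\mathfrak g]^{-1}\\ N\mathfrak m\le X/N\mathfrak g}}\phi(\mathfrak m),
\qquad
F_{\mathscr C}(X)=\sum_{\substack{[\mathfrak m]=1\\ N\mathfrak m\le X}}\phi(\mathfrak m).
\]

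It remains to show, for each ideal class $h$, that $\Phi_h(Y):=\sum_{[\mathfrak m]=h,\,N\mathfrak m\le Y}\phi(\mathfrak m)\sim AY^2$ with $A=\kappa_K/(2h_K\zeta_K(2))$ \emph{independent of $h$}, where $h_K=\#\mathrm{Cl}(K)$ and $\kappa_K=\operatorname{Res}_{s=1}\zeta_K(s)$. Using $\phi=\mu_K*N$ on ideals together with orthogonality of the characters $\chi$ of $\mathrm{Cl}(K)$, one obtains
\[
\sum_{[\mathfrak m]=h}\frac{\phi(\mathfrak m)}{N(\mathfrak m)^s}=\frac1{h_K}\sum_\chi\overline\chi(h)\,\frac{L(s-1,\chi)}{L(s,\chi)},
\]
where $L(s,\chi)$ is the Hecke $L$-function of the finite-order character $\chi$ and $L(s,\chi_0)=\zeta_K(s)$. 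On $\re s\ge2$ the only singularity is a simple pole at $s=2$ coming from $\chi_0$, of residue $\kappa_K/(h_K\zeta_K(2))$, since $L(s,\chi)$ is entire for $\chi\ne\chi_0$ and zero-free on $\re s\ge2$ while $\zeta_K$ has its unique pole at $s=1$; as $\phi\ge0$, the Wiener--Ikehara theorem yields $\Phi_h(Y)\sim\tfrac12\,\kappa_K/(h_K\zeta_K(2))\,Y^2$.

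Feeding this back, $F_{\mathscr C}(X)=\Phi_1(X)\sim AX^2$ and $F(X)=\sum_{\mathfrak g\in\mathcal R_K}\Phi_{[\mathfrak g]^{-1}}(X/N\mathfrak g)\sim AX^2\sum_{\mathfrak g\in\mathcal R_K}N(\mathfrak g)^{-2}$ --- a finite sum, each term carrying an error $o(X^2)$ --- so the ratio converges to $\big(\sum_{\mathfrak g\in\mathcal R_K}N(\mathfrak g)^{-2}\big)^{-1}$, as claimed. The main obstacle is the uniformity in $h$ in the analytic step, i.e.\ the equidistribution of integral ideals across ideal classes, which is exactly where $L(1,\chi)\ne0$ for the nontrivial class-group characters enters; the remaining difficulties are the bookkeeping in the bijection (in particular that $x=0$ and the unit/translation ambiguities contribute only $o(X^2)$) and carrying out the local count uniformly over all $\mathfrak g$, including ramified primes.
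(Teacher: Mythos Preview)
Your argument is correct and follows essentially the same route as the paper: the counting identity $\#\{a\bmod b:\langle a,b\rangle=\mathfrak g\}=\varphi(\langle b\rangle/\mathfrak g)$ is the paper's Lemma~\ref{lem:NumberOfRedFracModB}, the Dirichlet-series identity $\sum_{[\mathfrak m]=h}\varphi(\mathfrak m)(N\mathfrak m)^{-s}=h_K^{-1}\sum_\chi\overline\chi(h)L(s-1,\chi)/L(s,\chi)$ is Lemma~\ref{lem:L-EulerPhi} after character orthogonality, and your Wiener--Ikehara step plays the role of Proposition~\ref{P:Delange}. One small remark: your closing comment that $L(1,\chi)\neq 0$ is ``exactly where'' the difficulty lies is misleading---your residue computation only needs that $L(s-1,\chi)$ is holomorphic at $s=2$ for $\chi\neq\chi_0$ and that $L(s,\chi)\neq 0$ on $\re(s)\geq 2$, both of which are immediate from the Euler product, so nonvanishing at $s=1$ plays no role here.
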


Density theorems may be of interest in the study of Diophantine approximation in number fields, where Palmer has recently proved number field analogues of classical results in metric number theory \cite{Palmer1,Palmer2}. For a general reference, see \cite{Harman}.

\section*{Acknowledgements}
The first author received funding from the European Research Council (ERC) under the European Union's Horizon 2020 research and innovation programme (grant agreement No. 101001179), and the second author is partially supported by the Alfried Krupp prize.

\section*{Notation} The letters $\N$, $\Z$, $\Q$, $\R$, and $\C$ have their usual meaning. We denote by
\[\HH := \{\tau = u + iv \in \C: u, v \in \R, v > 0\}\]
the complex upper half-plane. Furtheremore, the ring of integers of a number field $K$ is denoted by $\mathcal{O}_K$, its class group by $\operatorname{Cl}_K$, and the {\it absolute value} of the norm of an algebraic integer $\alpha$ by $N \alpha$. $[\mathfrak{a}]$ represents the class of a fractional ideal $\mathfrak{a}$ of $K$ in $\operatorname{Cl}_K$. We also abbreviate
\begin{align*}
\langle\omega_1, \omega_2, \ldots, \omega_n\rangle_R := R \omega_1 + R\omega_2 + \cdots + R\omega_n
\end{align*}
for any ring $R$ and omit the index if $R = \mathcal{O}_K$.

\section{Density theory}

\subsection{Density in ordered countable sets} 
Given $\mathcal{A} = \{a_1, a_2, \ldots \} \supset \{b_1, b_2, \ldots \} = \mathcal{B}$, one can say that the natural density of $\mathcal{B}$ in $\mathcal{A}$ is $\delta$ if
\begin{align} \label{eq:ordered-density}
\lim_{n \to \infty} \frac{|\mathcal{B} \cap \{a_1, \dots, a_n\}|}{n} = \delta.
\end{align}
More generally, within $\mathcal{A} $, we assign an attribute $n$ to each element via a mapping $\kappa \colon \mathcal{A} \to \N$, such as assigning to a coprime positive fraction the value of the denominator. The number of elements with attribute $n$ then defines a sequence $f(n)$. If this sequence has moderate growth, the corresponding Dirichlet series 
\begin{align*}
F(s) := \sum_{n \geq 1} \frac{f(n)}{n^s}
\end{align*}
converges on a right half-plane. Since $\mathcal{A}$ is infinite, by Landau's theorem there exists a singular point $\sigma_{\mathcal{A}} \geq 0$, such that the function $F$ cannot be holomorphically continued to the left of the half plane $\mathrm{Re}(s)>\sigma_{\mathcal{A}}$. In some applications this is a (multiple) pole, so at least a meromorphic continuation is possible, which has some advantages.
\begin{proposition}[see \cite{Tenenbaum} on p. 350]\label{P:Delange}
Let $F(s):=\sum_{n\geq 1}\frac{f(n)}{n^s}$ be a Dirichlet series with nonnegative coefficients, converging for $\mathrm{R}(s)>\sigma_0>0$, such that the function
\begin{align*}
			H(s) := F(s) - \frac{A}{s - \sigma_0}
\end{align*}
with some real $A$ has an analytic continuation to $\{s \in \C : \mathrm{Re}(s) \geq \sigma_0 \}$. Then we have 
\begin{align*}
\sum_{1 \leq n \leq x} f(n) \sim \frac{Ax^{\sigma_0}}{\sigma_0}, \qquad x \rightarrow \infty.
\end{align*}
\end{proposition}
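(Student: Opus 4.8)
The plan is to recognise Proposition~\ref{P:Delange} as a form of the Wiener--Ikehara Tauberian theorem — with the singularity placed at $\sigma_0$ rather than at $1$ — and to run the standard Fourier-analytic proof of that theorem; a fully worked out version can be found in \cite{Tenenbaum}. First I would put $S(x) := \sum_{n \leq x} f(n)$, so that the assertion becomes $S(x) \sim \tfrac{A}{\sigma_0} x^{\sigma_0}$. Since $f \geq 0$ and $F$ converges for $\mathrm{Re}(s) > \sigma_0$, one has the crude bound $S(x) = O(x^{\sigma_0 + \delta})$ for every $\delta > 0$, and Abel summation gives $F(s) = s \int_1^{\infty} S(x) x^{-s-1}\,dx$ on $\mathrm{Re}(s) > \sigma_0$. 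Substituting $x = e^{u}$ and writing $\psi(u) := e^{-\sigma_0 u} S(e^{u}) \geq 0$ turns $F(s)/s$ into a translate of the Laplace transform of $\psi$, namely $F(s)/s = \int_0^{\infty} \psi(u)\, e^{-(s - \sigma_0)u}\,du$ for $\mathrm{Re}(s) > \sigma_0$. The two facts I would keep in reserve are that $\psi \geq 0$ and that $u \mapsto e^{\sigma_0 u}\psi(u) = S(e^{u})$ is nondecreasing; these play the role of the Tauberian side-condition.

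Next I would feed in the analytic input. Dividing $H(s) = F(s) - \tfrac{A}{s - \sigma_0}$ by $s$ is harmless on $\mathrm{Re}(s) \geq \sigma_0$ (since $\sigma_0 > 0$), and a short manipulation shows that $G(w) := \int_0^{\infty} \big(\psi(u) - \tfrac{A}{\sigma_0}\big) e^{-wu}\,du$, defined a priori only for $\mathrm{Re}(w) > 0$, extends analytically to an open neighbourhood of the closed half-plane $\mathrm{Re}(w) \geq 0$. The core of the argument is then Fej\'er-kernel smoothing: for $\lambda > 0$ let $k_\lambda(t) := \max(0, 1 - |t|/\lambda)$, whose inverse Fourier transform is the nonnegative Fej\'er kernel $K_\lambda$, of total mass $1$, concentrating at the origin as $\lambda \to \infty$. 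Expanding $\int_{-\lambda}^{\lambda} G(\varepsilon + it)\, k_\lambda(t)\, e^{iyt}\,dt$ by Fubini as $2\pi \int_0^{\infty} \big(\psi(u) - \tfrac{A}{\sigma_0}\big) e^{-\varepsilon u} K_\lambda(y - u)\,du$, then letting $\varepsilon \to 0^+$ (dominated convergence on the left by continuity of $G$ on the compact segment, monotone convergence for the $\psi$-piece on the right) and finally $y \to \infty$ (Riemann--Lebesgue on the left), would give, for each fixed $\lambda > 0$,
\[
\int_0^{\infty} \psi(u)\, K_\lambda(y - u)\,du \;\longrightarrow\; \frac{A}{\sigma_0} \qquad (y \to \infty).
\]

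It then remains to convert this averaged convergence into $\psi(y) \to \tfrac{A}{\sigma_0}$, and this is the step I expect to require real care. For the upper bound, monotonicity of $S$ gives $\psi(y + t) \geq e^{-\sigma_0 t}\psi(y)$ for $t \geq 0$; evaluating the displayed convolution at the points $y_k + h$, for a sequence $y_k \to \infty$ along which $\psi(y_k) \to \limsup_{y \to \infty} \psi(y)$, and then sending $\lambda \to \infty$ and $h \to 0^+$ in that order, squeezes $\limsup_{y \to \infty} \psi(y) \leq \tfrac{A}{\sigma_0}$; in particular $\psi$ is bounded. For the lower bound, the complementary inequality $\psi(y - t) \leq e^{\sigma_0 t}\psi(y)$ for $t \geq 0$ controls $\psi$ just to the left of a sequence realising $\liminf_{y \to \infty} \psi(y)$, the boundedness just obtained handles the tails of $K_\lambda$, and the same ordering of limits forces $\liminf_{y \to \infty} \psi(y) \geq \tfrac{A}{\sigma_0}$. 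Hence $\psi(y) \to \tfrac{A}{\sigma_0}$, which unwinds to $S(x) \sim \tfrac{A}{\sigma_0}x^{\sigma_0}$, i.e.\ $\sum_{1 \leq n \leq x} f(n) \sim \tfrac{A x^{\sigma_0}}{\sigma_0}$. All of the content sits in choosing the order of the limits in $\lambda$, in $y$, and in the window width $h$ so that the one-sided bounds coming from $f \geq 0$ actually close; everything before this is routine Fourier bookkeeping.
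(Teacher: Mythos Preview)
The paper does not supply its own proof of this proposition: it is stated with a citation to \cite{Tenenbaum} (p.~350) and used as a black box, so there is nothing in the paper to compare your argument against. Your sketch is the standard Wiener--Ikehara route (Abel summation to a Laplace transform, Fej\'er-kernel smoothing to pass to the boundary, then the monotonicity of $S$ to undo the smoothing), and it is correct; this is essentially the argument one finds in Tenenbaum, so you are in effect reproducing the referenced proof rather than offering an alternative.

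One small point worth tightening: you assert that $G(w)$ extends analytically to an open neighbourhood of $\{\mathrm{Re}(w)\geq 0\}$, but the stated hypothesis is only that $H$ continues analytically to the \emph{closed} half-plane $\{\mathrm{Re}(s)\geq\sigma_0\}$. Whether this means ``analytic on an open neighbourhood'' or merely ``analytic on the open half-plane with continuous boundary values'' depends on convention; fortunately the Fej\'er-kernel step only needs continuity of $G(it)$ on each compact segment $[-\lambda,\lambda]$ to justify the limit $\varepsilon\to 0^+$, so the argument goes through either way. It would be cleaner to say so explicitly rather than to silently upgrade the hypothesis.
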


We now define two notions of density.

\begin{definition}  Let $\mathcal{B} \subseteq \mathcal{A}$.  Define \begin{align*}
g(n) := |\{ x \in \mathcal{B} \colon \kappa(x) = n\}|.
\end{align*} Note that $0 \leq g(n) \leq f(n)$.  When the limit exists, we define a {\it natural} density,
    \begin{align*}
\delta_{\mathrm{nat}}(\mathcal{B}):=\lim_{x \to \infty} \frac{\sum_{n \leq x} g(n)}{\sum_{n \leq x} f(n)} = \delta
\end{align*}
and a {\it Dirichlet} density
\begin{align*}
\delta_{\mathrm{Dir}}(\mathcal{B}):=\lim_{s \to \sigma_{\mathcal{A}}^+} \frac{\sum_{n \geq 1} \frac{g(n)}{n^s}}{\sum_{n \geq 1} \frac{f(n)}{n^s}} = \delta.
\end{align*}
\end{definition}

Note that \eqref{eq:ordered-density} is in some ways a more precise definition than the upper one, but requires a total ordering $<_{\mathcal{A}}$ on the set $\mathcal{A}$ that respects the attribute map $\kappa \colon \mathcal{A} \to \N$, i.e., $a_m <_{\mathcal{A}} a_n$ if and only if $\kappa(a_m) < \kappa(a_n)$. However, since in the above approach we do not order within element classes with the same attribute, this is a significant refinement with respect to the upper requirements. However, under mild conditions at $\mathcal{A}$ and $\kappa$, the two definitions can be merged, i.e., the sorting of the elements within the original images $\kappa^{-1}(\{n\})_{n \in \N}$ is therefore no longer important. Such a condition is 
$$f(n) = o\left( \sum_{1 \leq j \leq n-1} f(j)\right),$$
in other words, that there is a certain ``uniformity'' in the distribution.

As in the classical case, one can show that when the natural density exists, so does the Dirichlet density and both are equal. The proof is a straightforward calculation using partial summation. 

\begin{proposition} \label{prop:NatDensImpliesDirichGeneral}
Let the notation be as above. Assume that the Dirichlet series $\sum_{n \geq 1} f(n)n^{-s}$ has abscissa of convergence $\sigma_{\mathcal{A}} > 0$ and that $\sum_{n \geq 1} f(n)n^{-\sigma_{\mathcal{A}}} = \infty$. Further assume that $\mathcal{B} \subseteq \mathcal{A}$ has a natural density, i.e.,
\begin{align*}
\lim_{x \to \infty} \frac{\sum_{n \leq x} g(n)}{\sum_{n \leq x} f(n)} = \delta
\end{align*}
for some $\delta \in [0,1]$. Then 
\begin{align*}
\lim_{s \to \sigma_{\mathcal{A}}^+} \frac{\sum_{n \geq 1} \frac{g(n)}{n^s}}{\sum_{n \geq 1} \frac{f(n)}{n^s}} = \delta.
\end{align*}
\end{proposition}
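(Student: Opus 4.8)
The plan is to run the classical Abel/partial summation argument adapted to this setting. Write $G(x) := \sum_{n \le x} g(n)$ and $F(x) := \sum_{n \le x} f(n)$, so the hypothesis says $G(x) = (\delta + o(1)) F(x)$ as $x \to \infty$, and by assumption $F(x) \to \infty$. First I would record the partial summation identity: for $s > \sigma_{\mathcal{A}}$,
\[
\sum_{n \ge 1} \frac{f(n)}{n^s} = s \int_1^\infty \frac{F(x)}{x^{s+1}}\, dx,
\]
valid since the Dirichlet series converges there, and likewise $\sum_{n \ge 1} g(n) n^{-s} = s \int_1^\infty G(x) x^{-s-1}\, dx$ (the latter converges because $0 \le g(n) \le f(n)$, so its abscissa of convergence is at most $\sigma_{\mathcal{A}}$). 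Taking the quotient, the claim becomes
\[
\lim_{s \to \sigma_{\mathcal{A}}^+} \frac{\int_1^\infty G(x) x^{-s-1}\, dx}{\int_1^\infty F(x) x^{-s-1}\, dx} = \delta.
\]

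Next I would isolate the contribution of the tail. Fix $\varepsilon > 0$ and choose $X_0$ so that $|G(x) - \delta F(x)| \le \varepsilon F(x)$ for all $x \ge X_0$. Split both integrals at $X_0$. On $[1, X_0]$ the numerator and denominator of the difference $\int (G - \delta F) x^{-s-1}$ are bounded by a constant independent of $s$ (for $s$ in a fixed right-neighbourhood of $\sigma_{\mathcal{A}}$), while on $[X_0, \infty)$ we have the pointwise bound $|G(x) - \delta F(x)| x^{-s-1} \le \varepsilon F(x) x^{-s-1}$, so
\[
\left| \int_1^\infty \bigl(G(x) - \delta F(x)\bigr) x^{-s-1}\, dx \right| \le C(X_0) + \varepsilon \int_1^\infty F(x) x^{-s-1}\, dx.
\]
Dividing by $\int_1^\infty F(x) x^{-s-1}\, dx$, which equals $s^{-1}\sum_n f(n) n^{-s}$ and therefore tends to $+\infty$ as $s \to \sigma_{\mathcal{A}}^+$ (this is exactly where the hypothesis $\sum_n f(n) n^{-\sigma_{\mathcal{A}}} = \infty$ is used, together with monotone convergence), the term $C(X_0)$ over this quantity tends to $0$. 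Hence $\limsup_{s \to \sigma_{\mathcal{A}}^+}$ of the quotient's deviation from $\delta$ is at most $\varepsilon$; letting $\varepsilon \to 0$ finishes the proof.

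The main obstacle — really the only subtle point — is ensuring the denominator integral genuinely diverges as $s \downarrow \sigma_{\mathcal{A}}$, so that the fixed error $C(X_0)$ from the initial segment is negligible; this is precisely the role of the standing assumption $\sum_{n} f(n) n^{-\sigma_{\mathcal{A}}} = \infty$, and one should note that without it (e.g. if $F$ stays bounded at the abscissa) the statement can fail. A secondary, more bookkeeping-type point is justifying the partial-summation identities and the interchange of limit and integral (monotone convergence for nonnegative integrands handles the latter), and confirming that $\sum g(n) n^{-s}$ converges on the same half-plane, which is immediate from $0 \le g \le f$. Everything else is the routine $\varepsilon$-estimate above.
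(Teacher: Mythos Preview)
Your argument is correct and is essentially identical to the paper's: the paper states only that ``the proof is a straightforward calculation using partial summation,'' and the (suppressed) argument in the source does exactly what you do --- Abel summation to pass to integrals of $F$ and $G$, an $\varepsilon$--cutoff at some $x_0$, and the divergence hypothesis $\sum f(n)n^{-\sigma_{\mathcal A}}=\infty$ to kill the bounded initial segment. One cosmetic remark: the claim ``by assumption $F(x)\to\infty$'' is not literally an assumption but a consequence of $\sigma_{\mathcal A}>0$; in any case you do not actually use it, only the divergence of $\int F(x)x^{-s-1}\,dx$ as $s\downarrow\sigma_{\mathcal A}$.
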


\subsection{Density in $\Q$} Recall the set $\mathcal{I}$ from the introduction (see \ref{eq:DefIQ}).  Here, we take $\kappa(m,n):=n$ and $f(n)=\varphi(n)$.  Let $\mathcal{B} \subset \mathcal{I}$. We have a {\it natural} density
    \begin{align} \label{eq:naturaldensity}
    \delta_{\mathrm{nat}}(\mathcal{B}) = \lim_{x \to \infty} \frac{\sum_{1 \leq n \leq x}\sum_{1 \leq m \leq n}\mathbf{1}_\mathcal{B}(m,n)}{\sum_{1\leq n \leq x} \varphi(n)} = \lim_{x \to \infty} \frac{\pi^2}{3x^2}\sum_{1 \leq n \leq x}\sum_{1 \leq m \leq n}\mathbf{1}_\mathcal{B}(m,n),
    \end{align}
    and a {\it Dirichlet} density
    \begin{align} 
    \nonumber \delta_{\mathrm{Dir}}(\mathcal{B}) &:= \lim_{s \to 2^+} \frac{\zeta(s)}{\zeta(s-1)} \sum_{n \geq 1} \frac{1}{n^s} \sum_{1 \leq m \leq n} \mathbf{1}_\mathcal{B}(m,n) \\ 
    \label{eq:dirichletdensity} &=\lim_{s \to 2^+} \frac{\pi^2}{6\zeta(s-1)} \sum_{n \geq 1} \frac{1}{n^s} \sum_{1 \leq m \leq n} \mathbf{1}_\mathcal{B}(m,n).
    \end{align}

\begin{example}
    Let $\mathcal{B}_p := \{(a,n) \in \mathcal{I}: p \mid n\}$ for a rational prime $p \in \mathbb{P}$.  Here,
    $$
    g(n)=\mathbf{1}_{p \mid n} \varphi(n).
    $$Then
    \begin{align*}
        \delta_\mathrm{nat}(\mathcal{B}_p) &= \lim_{x \to \infty} \frac{\pi^2}{3x^2}\sum_{1 \leq n \leq x, \, p \mid n} \varphi(n)\\
        &= \lim_{x \to \infty} \frac{\pi^2}{3x^2} \left(\sum_{1 \leq n \leq \frac{x}{p}, \, p \mid n}p\varphi(n) + \sum_{1 \leq n \leq \frac{x}{p}, \, p \nmid n}(p - 1)\varphi(n)\right)\\
        &= \frac{1}{p^2} \lim_{x \to \infty} \frac{\pi^2}{3\left(\frac{x}{p}\right)^2}\left(\sum_{1 \leq n \leq \frac{x}{p}, \, p \mid n}\varphi(n) + (p - 1)\sum_{1 \leq n \leq \frac{x}{p}} \varphi(n)\right)\\
        &= \frac{1}{p^2}(\delta_\mathrm{nat}(\mathcal{B}_p) + p - 1).
    \end{align*}
    Thus, $\delta_\mathrm{nat}(\mathcal{B}_p) = \frac{1}{p + 1}$.
\end{example}

\subsection{Density in a number field}

    Let $K$ be a number field and recall the set $\mathcal{I}_K$ from the introduction (see \eqref{eq:DefI_K}).  Below, sums over $b$ run as in \eqref{eq:DefI_K}, i.e. over $\nicefrac{\mathcal{O}_K \setminus \{0\}}{\mathcal{O}_K^\times}$.   Denote the number of reduced fractions in $\mathcal{I}_K$ with denominator $b$ with respect to $\mathcal{R}_K$ by $\eta_K(b)$.  Here, we take $\kappa(a,b):=N b$ and $$f(n)=\widetilde{\eta_K}(n):= \sum_{N b = n}\eta_K(b).$$
    For a subset $\mathcal{S} \subset \mathcal{I}_K$, we set $\mathcal{S}(b) := \#\{y \in \mathcal{S}: y \textup{ has denominator } b\}$.  We have a {\it natural} density
        \[\delta_{\mathrm{nat},K}(\mathcal{S}) := \lim_{x \to \infty} \frac{\sum_{1\leq n \leq x} \sum_{ Nb=n}  \sum_{y \pmod{b}}\mathbf{1}_{\mathcal{S}}(y,b)}{\sum_{1\leq n \leq x} \widetilde{\eta_K}(n)}= \lim_{x \to \infty} \frac{\sum_{Nb \leq x}   \mathcal{S}(b)}{\sum_{1\leq n \leq x} \widetilde{\eta_K}(n)},\]
         a \textit{Dirichlet} density by
        \[\delta_{\mathrm{Dir},K}(\mathcal{S}) := \lim_{s \to \sigma_\eta} \frac{\sum_{n \geq 1} \sum_{ Nb=n}  \sum_{y \pmod{b}}\frac{\mathbf{1}_{\mathcal{S}}(y,b)}{n^s}}{\sum_{n \geq 1} \frac{\widetilde{\eta_K}(n)}{n^s}}= \lim_{s \to \sigma_\eta} \frac{\sum_{b} \frac{\mathcal{S}(b)}{(Nb)^s}}{\sum_{b} \frac{\eta_K(b)}{(Nb)^s}}\]
        where $\sigma_\eta$ is the abscissa of convergence of $\sum_{b} \eta_K(b)(Nb)^{-s}$.

        In particular, when $\mathcal{S}=\mathscr{C}_K\subset \mathcal{I}_K$ is the set of coprime fractions, we have $\mathcal{S}(b)=\varphi(\langle b \rangle)$, and
        \begin{equation}\label{E:DirDensityCorpime}
        \delta_{\mathrm{Dir},K}(\mathscr{C}_K)= \lim_{s \to \sigma_\eta} \frac{\sum_{b} \frac{\varphi(\langle b \rangle)}{(Nb)^s}}{\sum_{b} \frac{\eta_K(b)}{(Nb)^s}}.
        \end{equation}

\section{Ideal theory and reduced fractions}

Recall the sets $\mathcal{I}_K$ and $\mathscr{C}_K$ from the introduction (see \eqref{eq:DefI_K} and \eqref{eq:DefC_K}). In order to let the density on $K$ imitate what we intuitively expected from $\delta_\mathrm{nat}$ and $\delta_\mathrm{Dir}$, we require the elements of $\mathcal{R}_K$ to be inseverable. In particular, the only inseverable principal ideal is $\mathcal{O}_K$ itself.
\begin{lemma}\label{lem:RedIfInsev}
    The fraction $\frac{a}{b}$ with $b \neq 0$ is reduced if and only if $\langle a, b \rangle$ is an inseverable ideal.
\end{lemma}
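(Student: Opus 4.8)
The plan is to translate the elementwise condition defining ``reduced'' into an ideal-theoretic statement via the ``to contain is to divide'' principle, which holds in the Dedekind domain $\mathcal{O}_K$; both directions of the equivalence then fall out of the same dictionary.

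First I would record the relevant elementary facts. For $c, x \in \mathcal{O}_K$ one has $c \mid x$ if and only if $x \in \langle c \rangle$, and $\langle c \rangle = \mathcal{O}_K$ if and only if $c \in \mathcal{O}_K^\times$. Moreover, since $\mathcal{O}_K$ is Dedekind, for ideals $\mathfrak{c}$ and $\mathfrak{d}$ one has $\mathfrak{c} \mid \mathfrak{d}$ if and only if $\mathfrak{d} \subseteq \mathfrak{c}$. Since $b \neq 0$, the ideal $\langle a, b \rangle$ is nonzero, so any principal ideal $\langle c \rangle$ dividing it has $c \neq 0$. Combining these observations, for $c \in \mathcal{O}_K$,
\[
c \mid a \ \text{ and }\ c \mid b \iff a, b \in \langle c \rangle \iff \langle a, b\rangle \subseteq \langle c \rangle \iff \langle c \rangle \mid \langle a, b\rangle,
\]
so that the common divisors of $a$ and $b$ correspond precisely to the principal ideals dividing $\langle a, b\rangle$, with the units of $\mathcal{O}_K$ corresponding to $\mathcal{O}_K$ itself.

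With this chain in hand both implications are immediate. If $\frac{a}{b}$ is reduced and $\langle c \rangle$ is a principal ideal dividing $\langle a, b\rangle$, then $c$ is a common divisor of $a$ and $b$, hence cannot lie in $\mathcal{O}_K \setminus \mathcal{O}_K^\times$; therefore $c \in \mathcal{O}_K^\times$ and $\langle c \rangle = \mathcal{O}_K$, which shows $\langle a, b\rangle$ is inseverable. Conversely, if $\langle a, b\rangle$ is inseverable and $c \in \mathcal{O}_K$ satisfies $c \mid a$ and $c \mid b$, then $\langle c \rangle \mid \langle a, b\rangle$ forces $\langle c \rangle = \mathcal{O}_K$, i.e. $c \in \mathcal{O}_K^\times$; thus no element of $\mathcal{O}_K \setminus \mathcal{O}_K^\times$ divides both $a$ and $b$, so $\frac{a}{b}$ is reduced.

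There is no genuine obstacle here: the proof is a bookkeeping exercise once the dictionary is set up. The only point deserving care is the appeal to the Dedekind property of $\mathcal{O}_K$ in the equivalence ``$\langle a,b\rangle \subseteq \langle c\rangle \iff \langle c\rangle \mid \langle a, b\rangle$'', together with the harmless observation that $b \neq 0$ rules out the degenerate case $c = 0$.
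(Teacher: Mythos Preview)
Your proof is correct and follows essentially the same route as the paper's: both arguments hinge on the equivalence $c \mid a \text{ and } c \mid b \iff \langle a,b\rangle \subseteq \langle c\rangle$, together with the ``to contain is to divide'' principle in the Dedekind domain $\mathcal{O}_K$ to match this with the definition of inseverable. You are simply more explicit than the paper in spelling out the dictionary before applying it, and in flagging where the Dedekind hypothesis enters (namely, in passing from $\langle a,b\rangle \subseteq \langle c\rangle$ to $\langle c\rangle \mid \langle a,b\rangle$ in the ``inseverable $\Rightarrow$ reduced'' direction).
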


\begin{proof}
    Suppose that $\langle a, b \rangle$ is not inseverable, i.e. there exists $c \in \mathcal{O}_K \setminus \mathcal{O}_K^\times$ such that $\langle a, b \rangle \subset \langle c \rangle \subsetneq \mathcal{O}_K$. Then $c \mid a$ and $c \mid b$ so that $\frac{a}{b}$ can be reduced to $\nicefrac{\frac{a}{c}}{\frac{b}{c}}$. Hence, $\frac{a}{b}$ is not reduced.
    
    We are left to show that $\langle a, b \rangle$ being inseverable is sufficient for $\frac{a}{b}$ being reduced. So conversely, suppose that $\langle a, b \rangle$ is an inseverable ideal of $\mathcal{O}_K$. Then this immediately implies that there exists no $c \in \mathcal{O}_K$ with $c \mid a$ and $c \mid b$ because, otherwise, $\langle c \rangle \supset \langle a, b \rangle$ would hold.
\end{proof}

A useful sufficient criterion to the set $\mathcal{R}_K$ to contain exclusively inseverable ideals is given by the next lemma.

\begin{proposition} \label{prop:inseverable-ideals}
    Let $\mathcal{R}_K$ be a system of representatives of the class group such that each representative $\mathfrak{g} \in \mathcal{R}_K$ is an integral ideal with minimal norm within its respective class. Then $\mathcal{R}_K$ is a set of inseverable ideals. 
\end{proposition}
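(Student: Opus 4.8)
The plan is to argue directly that no nontrivial principal ideal can divide a minimal-norm class representative, using only multiplicativity of the ideal norm and the fact that a principal ideal $\langle c\rangle$ equals $\mathcal{O}_K$ exactly when $|Nc|=1$. So fix $\mathfrak{g}\in\mathcal{R}_K$ and suppose $\langle c\rangle$ is a principal ideal with $\langle c\rangle\mid\mathfrak{g}$. Recalling that for integral ideals in the Dedekind domain $\mathcal{O}_K$ divisibility is containment, there is an integral ideal $\mathfrak{h}\subseteq\mathcal{O}_K$ with $\mathfrak{g}=\langle c\rangle\,\mathfrak{h}$ (explicitly $\mathfrak{h}=\langle c\rangle^{-1}\mathfrak{g}$, which is integral since $\mathfrak{g}\subseteq\langle c\rangle$).

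The first key point is that $\mathfrak{h}$ lies in the same ideal class as $\mathfrak{g}$: in $\operatorname{Cl}_K$ we have $[\mathfrak{g}]=[\langle c\rangle]\,[\mathfrak{h}]=[\mathfrak{h}]$, because principal ideals are trivial in the class group. The second key point is multiplicativity of the norm together with the value of the norm of a principal ideal: $N\mathfrak{g}=N\langle c\rangle\cdot N\mathfrak{h}=|Nc|\cdot N\mathfrak{h}$, where $|Nc|$ is a positive rational integer. Hence $N\mathfrak{h}\leq N\mathfrak{g}$, with equality if and only if $|Nc|=1$.

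Now invoke the minimality hypothesis: since $\mathfrak{g}$ has minimal norm among integral ideals in its class and $\mathfrak{h}$ is such an ideal, $N\mathfrak{h}\geq N\mathfrak{g}$. Combined with $N\mathfrak{h}\leq N\mathfrak{g}$ this forces $N\mathfrak{h}=N\mathfrak{g}$, hence $|Nc|=1$, hence $c\in\mathcal{O}_K^\times$ and $\langle c\rangle=\mathcal{O}_K$. Thus the only principal ideal dividing $\mathfrak{g}$ is $\mathcal{O}_K$, i.e. $\mathfrak{g}$ is inseverable; as $\mathfrak{g}\in\mathcal{R}_K$ was arbitrary, $\mathcal{R}_K$ is a set of inseverable ideals.

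I do not expect any genuine obstacle here: the argument rests entirely on standard facts about Dedekind domains (norm multiplicativity, $N\langle c\rangle=|Nc|$, and $|Nc|=1\iff c\in\mathcal{O}_K^\times$). The only points requiring a little care are the bookkeeping conventions — reading ``$\langle c\rangle$ divides $\mathfrak{g}$'' as $\mathfrak{g}\subseteq\langle c\rangle$ and producing the cofactor ideal $\mathfrak{h}$ — and noting that, unlike in Lemma~\ref{lem:RedIfInsev}, nothing about generators $\langle a,b\rangle$ enters: we argue purely about the abstract integral ideal $\mathfrak{g}$ and its class.
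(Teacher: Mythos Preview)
Your proof is correct and follows essentially the same approach as the paper's: both pass from $\mathfrak{g}$ to the cofactor $\mathfrak{h}=\mathfrak{g}/\langle c\rangle$, observe it lies in the same ideal class, and use norm multiplicativity together with the minimality of $N\mathfrak{g}$ to force $\langle c\rangle=\mathcal{O}_K$. The only cosmetic difference is that the paper phrases this as a proof by contradiction, while you argue directly.
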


\begin{proof}
    By means of contradiction, suppose $\mathfrak{g} \in \mathcal{R}_K$ is not inseverable. Then there is an $a \in \mathcal{O}_K$ such that $\mathcal{O}_K \supsetneq \langle a \rangle \mid \mathfrak{g}$ and $\frac{\mathfrak{g}}{\langle a \rangle}$ is an integral ideal lying in the same ideal class as $\mathfrak{g}$ but with lesser norm. 
\end{proof}

We will now collect the reduced fractions of a number field in equivalence classes and hence determine how to ensure that each fraction is counted at most once.

\begin{proposition}\label{prop:ClassesOfFrations}
    Suppose $\frac{a_1}{b_1}$ is a reduced fraction where $b_1 \not \in \mathcal{O}_K^\times \cup \{0\}$. Suppose further that $\langle a_1, b_1\rangle = \mathfrak{g}_1$ is a non-principal inseverable ideal and $b_2 \neq b_1$ is another given element of $\mathcal{O}_K \setminus \{0\}$. Then $\frac{a_1}{b_1} = \frac{a_2}{b_2}$ (for some $a_2 \in \mathcal{O}_K$) is another reduced fraction if and only if there is an inseverable ideal $\mathfrak{g}_2 \neq \mathfrak{g}_1$ such that
    \[\frac{\langle b_1 \rangle}{\mathfrak{g}_1} = \frac{\langle b_2 \rangle}{\mathfrak{g}_2}.\]
    In particular, $[\mathfrak{g}_1] = [\mathfrak{g}_2]$.
\end{proposition}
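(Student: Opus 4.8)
The plan is to translate the equality of fractions $\frac{a_1}{b_1} = \frac{a_2}{b_2}$ into an ideal-theoretic statement and then use Lemma \ref{lem:RedIfInsev} to identify the inseverable ideals involved. First I would note that $\frac{a_1}{b_1} = \frac{a_2}{b_2}$ means $a_1 b_2 = a_2 b_1$ in $\mathcal{O}_K$, so in particular $b_1 \mid a_1 b_2$ as elements; passing to ideals, $\langle a_2 \rangle = \langle a_1 \rangle \langle b_2 \rangle \langle b_1 \rangle^{-1}$ as fractional ideals, and for this to be integral we need $\langle b_1 \rangle \mid \langle a_1 b_2 \rangle$. The key observation is that, writing $\mathfrak{g}_1 = \langle a_1, b_1 \rangle$, we have the coprimality-type factorization $\langle a_1 \rangle = \mathfrak{g}_1 \mathfrak{a}_1$ and $\langle b_1 \rangle = \mathfrak{g}_1 \mathfrak{b}_1$ where $\mathfrak{a}_1 := \langle a_1 \rangle \mathfrak{g}_1^{-1}$ and $\mathfrak{b}_1 := \langle b_1 \rangle \mathfrak{g}_1^{-1}$ are integral ideals with $\gcd(\mathfrak{a}_1, \mathfrak{b}_1) = \langle a_1, b_1 \rangle \mathfrak{g}_1^{-1} = \mathcal{O}_K$. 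Hence $\langle a_2 \rangle = \mathfrak{a}_1 \langle b_2 \rangle \mathfrak{b}_1^{-1}$, and since $\gcd(\mathfrak{a}_1, \mathfrak{b}_1) = \mathcal{O}_K$, integrality of $\langle a_2 \rangle$ forces $\mathfrak{b}_1 \mid \langle b_2 \rangle$. So set $\mathfrak{g}_2 := \langle b_2 \rangle \mathfrak{b}_1^{-1}$, which is an integral ideal; then $\frac{\langle b_1 \rangle}{\mathfrak{g}_1} = \mathfrak{b}_1 = \frac{\langle b_2 \rangle}{\mathfrak{g}_2}$ by construction, giving the displayed identity, and $[\mathfrak{g}_2] = [\langle b_2 \rangle] [\mathfrak{b}_1]^{-1} = [\mathfrak{b}_1]^{-1} = [\langle b_1 \rangle][\mathfrak{g}_1]^{-1} \cdot [\mathfrak{g}_1]^{-1}$... wait, more simply $[\mathfrak{g}_1 \mathfrak{b}_1] = [\langle b_1 \rangle] = 1$ so $[\mathfrak{b}_1] = [\mathfrak{g}_1]^{-1}$, and likewise $[\mathfrak{g}_2] = [\mathfrak{b}_1]^{-1} = [\mathfrak{g}_1]$, proving $[\mathfrak{g}_1] = [\mathfrak{g}_2]$.

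Next I would check that $\mathfrak{g}_2$ is inseverable and that $\frac{a_2}{b_2}$ is reduced. For this I would verify that $\langle a_2, b_2 \rangle = \mathfrak{g}_2$: from $\langle a_2 \rangle = \mathfrak{a}_1 \mathfrak{g}_2$ and $\langle b_2 \rangle = \mathfrak{b}_1 \mathfrak{g}_2$ we get $\langle a_2, b_2 \rangle = \mathfrak{g}_2 \gcd(\mathfrak{a}_1, \mathfrak{b}_1) = \mathfrak{g}_2$. By Lemma \ref{lem:RedIfInsev}, $\frac{a_2}{b_2}$ is reduced if and only if $\langle a_2, b_2 \rangle = \mathfrak{g}_2$ is inseverable; so I need to show $\mathfrak{g}_2$ is inseverable. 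If $\langle c \rangle \mid \mathfrak{g}_2$ with $\langle c \rangle \subsetneq \mathcal{O}_K$, then $\langle c \rangle \mid \langle a_2 \rangle$ and $\langle c \rangle \mid \langle b_2 \rangle = \mathfrak{b}_1 \mathfrak{g}_2$; combined with $\langle c \rangle \mid \langle a_2 \rangle = \mathfrak{a}_1 \mathfrak{g}_2$ and $\gcd(\mathfrak{a}_1, \mathfrak{b}_1) = \mathcal{O}_K$, one deduces $\langle c \rangle \mid \mathfrak{g}_2$ already follows — so I should instead argue: $\langle c \rangle \mid \mathfrak{g}_2$ means $\langle c \rangle \mid \langle a_2, b_2\rangle$, but then via $a_1 b_2 = a_2 b_1$ and the coprimality of $\mathfrak{a}_1, \mathfrak{b}_1$ one shows $\langle c \rangle \mid \langle a_1, b_1 \rangle = \mathfrak{g}_1$, contradicting inseverability of $\mathfrak{g}_1$ unless $\langle c \rangle = \mathcal{O}_K$. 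The cleanest route is the symmetric observation that $\frac{\langle b_1\rangle}{\mathfrak{g}_1} = \frac{\langle b_2 \rangle}{\mathfrak{g}_2}$ together with $a_1 b_2 = a_2 b_1$ forces $\langle a_2 \rangle / \mathfrak{g}_2 = \langle a_1 \rangle / \mathfrak{g}_1 = \mathfrak{a}_1$, which is prime-to-$\mathfrak{b}_1$, so any principal divisor of $\mathfrak{g}_2$ is a principal divisor of both $\langle a_1 \rangle$ and $\langle b_1 \rangle$, hence of $\mathfrak{g}_1$.

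For the converse direction, given an inseverable $\mathfrak{g}_2 \neq \mathfrak{g}_1$ with $\frac{\langle b_1 \rangle}{\mathfrak{g}_1} = \frac{\langle b_2 \rangle}{\mathfrak{g}_2}$, I would reverse the construction: set $\mathfrak{b}_1 := \langle b_1 \rangle \mathfrak{g}_1^{-1} = \langle b_2 \rangle \mathfrak{g}_2^{-1}$ and $a_2 := a_1 b_2 / b_1$, checking this lies in $\mathcal{O}_K$ because $\langle a_1 \rangle \langle b_2 \rangle \langle b_1 \rangle^{-1} = \mathfrak{a}_1 \mathfrak{g}_1 \cdot \mathfrak{b}_1 \mathfrak{g}_2 \cdot \mathfrak{g}_1^{-1} \mathfrak{b}_1^{-1} = \mathfrak{a}_1 \mathfrak{g}_2$ is integral; then $\frac{a_1}{b_1} = \frac{a_2}{b_2}$ by construction, and reducedness of $\frac{a_2}{b_2}$ follows as above from $\langle a_2, b_2 \rangle = \mathfrak{g}_2$ being inseverable. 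I expect the main obstacle to be the bookkeeping around showing $\langle a_2, b_2 \rangle$ is exactly $\mathfrak{g}_2$ (not merely divisible by it) and transferring inseverability from $\mathfrak{g}_1$ to $\mathfrak{g}_2$; the rest is routine fractional-ideal arithmetic, using unique factorization of ideals in $\mathcal{O}_K$ and the fact that $\gcd(\langle a_1 \rangle \mathfrak{g}_1^{-1}, \langle b_1 \rangle \mathfrak{g}_1^{-1}) = \mathcal{O}_K$ throughout.
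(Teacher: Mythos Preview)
Your computation that $\langle a_2, b_2\rangle = \mathfrak{g}_2$ is correct and is the heart of the matter, and once you have it, Lemma~\ref{lem:RedIfInsev} finishes both directions immediately: in the forward direction, reducedness of $\frac{a_2}{b_2}$ is a \emph{hypothesis}, so $\mathfrak{g}_2 = \langle a_2,b_2\rangle$ is inseverable; in the converse direction, inseverability of $\mathfrak{g}_2$ is given, so $\frac{a_2}{b_2}$ is reduced. The entire second paragraph where you try to \emph{derive} inseverability of $\mathfrak{g}_2$ from that of $\mathfrak{g}_1$ is therefore unnecessary --- and in fact the argument you sketch there is false. The claim ``any principal divisor of $\mathfrak{g}_2$ is a principal divisor of $\mathfrak{g}_1$'' fails: take $b_2 = d\,b_1$ for a non-unit $d\in\mathcal{O}_K$, so that $a_2 = d\,a_1$ and $\mathfrak{g}_2 = \langle d\rangle\,\mathfrak{g}_1$; then $\langle d\rangle \mid \mathfrak{g}_2$ but $\langle d\rangle \nmid \mathfrak{g}_1$. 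This is not a contradiction to the proposition precisely because in that situation $\frac{a_2}{b_2}$ is \emph{not} reduced --- which is why the reducedness hypothesis cannot be dropped and why you should simply invoke it rather than try to bypass it.

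By comparison, the paper's proof skips your coprime decomposition $\langle a_1\rangle = \mathfrak{g}_1\mathfrak{a}_1$, $\langle b_1\rangle = \mathfrak{g}_1\mathfrak{b}_1$ entirely and just computes in one line
\[
\langle a_2, b_2\rangle \;=\; \Bigl\langle \tfrac{a_1 b_2}{b_1},\, b_2\Bigr\rangle \;=\; \langle b_1\rangle^{-1}\langle b_2\rangle\,\langle a_1, b_1\rangle \;=\; \langle b_1\rangle^{-1}\langle b_2\rangle\,\mathfrak{g}_1,
\]
which simultaneously gives the displayed identity, the equality $\mathfrak{g}_2 = \langle a_2,b_2\rangle$, and (for the converse) integrality of $a_2$. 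Your route via $\mathfrak{a}_1,\mathfrak{b}_1$ reaches the same destination but with more bookkeeping; the one-line fractional-ideal identity is the cleaner packaging.
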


\begin{proof}
    If $\frac{a_1}{b_1} = \frac{a_2}{b_2}$, then $a_2 = \frac{a_1b_2}{b_1}$ and by Lemma \ref{lem:RedIfInsev}, there exists an inseverable ideal $\mathfrak{g}_2 \mid b_2$ such that
    \[\mathfrak{g}_2 = \langle a_2, b_2 \rangle = \left\langle \frac{a_1b_2}{b_1}, b_2 \right\rangle = \langle b_1 \rangle^{-1} \langle b_2 \rangle \langle a_1, b_1 \rangle = \langle b_1 \rangle^{-1} \langle b_2 \rangle \mathfrak{g}_1.\]
    Conversely, given $\frac{\langle b_1\rangle }{\mathfrak{g}_1}=\frac{\langle b_2\rangle}{\mathfrak{g}_2}$, we again get 
    $$
    \mathfrak{g}_2=\langle a_2, b_2\rangle=\left< \frac{a_1b_2}{b_1}, b_2\right>,
    $$
and by taking $a_2=\frac{a_1b_2}{b_1}$ we get another fraction which is reduced because $\mathfrak{g}_2$ is inseverable.
\end{proof}

\begin{remark}
    If $\frac{a}{b}$ is a reduced fraction, then $\langle ca, cb \rangle = \langle c \rangle \langle a, b \rangle$. Hence, the last sentence of the previous proposition remains true for non-reduced fractions.  Moreover, if $\langle a_1, b_1 \rangle$ is a principal inseverable ideal, then it equals $\mathcal{O}_K$ and the statement becomes trivial.
\end{remark}

Now we conversely prove that each fraction is counted at least once.

\begin{proposition}\label{prop:EveryIdealIsAFraction}
    Let $\mathfrak{a} \subset \mathcal{O}_K$ be a non-zero ideal. Then there is a fraction $\frac{a}{b}$ with $a, b \in \mathcal{O}_K$ such that $\langle a, b \rangle = \mathfrak{a}$.
\end{proposition}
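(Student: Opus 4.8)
The plan is to establish the classical ``$1\tfrac12$-generator'' property of the Dedekind domain $\mathcal{O}_K$: every nonzero ideal can be generated by two elements, and moreover one of the two generators may be taken to be \emph{any} prescribed nonzero element of the ideal. Since $\frac{a}{b}$ is supposed to be an honest fraction, I will also arrange that the chosen $b$ is nonzero, so that the construction is compatible with the earlier discussion. The tool doing all the work will be the Chinese Remainder Theorem together with unique factorization of ideals.

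First I would dispose of the trivial case $\mathfrak{a} = \mathcal{O}_K$ by taking $a = b = 1$. Otherwise, choose any nonzero $a \in \mathfrak{a}$, which exists because $\mathfrak{a} \neq 0$. Then $\langle a \rangle \subseteq \mathfrak{a}$, equivalently $\mathfrak{a} \mid \langle a \rangle$, so there is an integral ideal $\mathfrak{c}$ with $\langle a \rangle = \mathfrak{a}\mathfrak{c}$. Let $\mathfrak{p}_1, \dots, \mathfrak{p}_r$ be the distinct prime ideals occurring in the factorization of $\langle a \rangle$, and set $e_i := v_{\mathfrak{p}_i}(\mathfrak{a}) \geq 0$, where $v_{\mathfrak{p}}$ denotes the $\mathfrak{p}$-adic valuation. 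Since every prime dividing $\mathfrak{a}$ also divides $\langle a \rangle$, we have $\mathfrak{a} = \prod_{i=1}^r \mathfrak{p}_i^{e_i}$; note that some $e_i$ may be $0$ (namely for primes of $\mathfrak{c}$ not dividing $\mathfrak{a}$), and these must not be discarded.

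The heart of the argument is to produce an element $b$ whose valuation at each $\mathfrak{p}_i$ is \emph{exactly} $e_i$. For each $i$ pick $\pi_i \in \mathfrak{p}_i^{e_i} \setminus \mathfrak{p}_i^{e_i+1}$, which is possible since $\mathfrak{p}_i^{e_i} \neq \mathfrak{p}_i^{e_i+1}$ by unique factorization (when $e_i = 0$ one may take $\pi_i = 1$). As the ideals $\mathfrak{p}_1^{e_1+1}, \dots, \mathfrak{p}_r^{e_r+1}$ are pairwise coprime, the Chinese Remainder Theorem yields $b \in \mathcal{O}_K$ with $b \equiv \pi_i \pmod{\mathfrak{p}_i^{e_i+1}}$ for every $i$, whence $v_{\mathfrak{p}_i}(b) = e_i$ for all $i$. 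In particular $b \in \prod_i \mathfrak{p}_i^{e_i} = \mathfrak{a}$ and $b \neq 0$.

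It then remains to check $\langle a, b \rangle = \mathfrak{a}$, which I would do primewise using $v_{\mathfrak{p}}(\langle a, b\rangle) = \min(v_{\mathfrak{p}}(a), v_{\mathfrak{p}}(b))$: at $\mathfrak{p}_i$ we get $\min(e_i + v_{\mathfrak{p}_i}(\mathfrak{c}),\, e_i) = e_i$, while at any prime $\mathfrak{q} \notin \{\mathfrak{p}_1, \dots, \mathfrak{p}_r\}$ we have $v_{\mathfrak{q}}(a) = 0$ and hence $v_{\mathfrak{q}}(\langle a, b\rangle) = 0$; therefore $\langle a, b\rangle = \prod_i \mathfrak{p}_i^{e_i} = \mathfrak{a}$. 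There is no serious obstacle here beyond careful bookkeeping. The one point genuinely requiring care is that the CRT step must pin down $v_{\mathfrak{p}_i}(b)$ \emph{exactly}, not merely guarantee $b \in \mathfrak{a}$: if $b$ were too divisible at some prime $\mathfrak{p}_i$ that also divides $\mathfrak{c}$, then $\langle a, b\rangle$ would be properly contained in $\mathfrak{a}$. Choosing $b$ with prescribed valuations at every prime of $\langle a \rangle$ is exactly what rules this out.
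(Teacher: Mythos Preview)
Your argument is correct: you are proving the standard ``$1\tfrac12$-generator'' theorem for Dedekind domains directly via the Chinese Remainder Theorem, and the bookkeeping with valuations is accurate, including the point that $b$ must have valuation \emph{exactly} $e_i$ at each $\mathfrak{p}_i$ (not merely at least $e_i$).

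The paper takes a slightly different route. After choosing $a \in \mathfrak{a}\setminus\{0\}$ and writing $\langle a\rangle = \mathfrak{a}\mathfrak{c}$, rather than building $b$ element-wise by CRT it invokes the standard fact that every ideal class contains an integral representative coprime to any prescribed modulus: pick $\mathfrak{b}\in[\mathfrak{c}]$ with $\mathfrak{b}+\mathfrak{c}=\mathcal{O}_K$, so that $\mathfrak{a}\mathfrak{b}$ is principal, say $\mathfrak{a}\mathfrak{b}=\langle b\rangle$, and then $\langle a,b\rangle = \mathfrak{a}\mathfrak{c}+\mathfrak{a}\mathfrak{b}=\mathfrak{a}(\mathfrak{c}+\mathfrak{b})=\mathfrak{a}$. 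Your proof is essentially an unwinding of that lemma: the ``coprime representative in a class'' statement is itself proved by the same CRT/approximation argument you carry out explicitly. The paper's version is a line shorter if one is willing to quote the lemma; yours is more self-contained and makes transparent exactly which valuation conditions on $b$ are needed.
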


\begin{proof}
    Take any $a \in \mathfrak{a} \setminus \{0\}$, so $\mathfrak{a} \supset \langle a \rangle$. As $\mathcal{O}_K$ is a Dedekind ring, the ideal $\langle a \rangle$ has a unique ideal factorization $\langle a \rangle = \mathfrak{a} \prod_{k = 1}^r\mathfrak{p}^{e_k}$. Now take any ideal $\mathfrak{b} \in \left[\prod_{k = 1}^r\mathfrak{p}^{e_k}\right]$ being coprime to $\prod_{k = 1}^r\mathfrak{p}^{e_k}$, so $\mathfrak{a}\mathfrak{b}$ is a principal ideal whose generator we denote by $b$. Then $\langle a, b \rangle = \mathfrak{a}$.
\end{proof}


Finally, we will see that our density depends on such a system of representatives.

\begin{lemma}\label{lem:NumberOfRedFracModB}
    Let $\mathcal{R}_K$ be a system of representatives of the class group such that each of its elements is an inseverable ideal. Suppose further that $b \in \mathcal{O}_K \setminus \{0\}$. Then the number of reduced fractions in $\mathcal{I}_K$ with denominator $b$ with respect to $\mathcal{R}_K$ equals
    \[ \eta_K(b) := \sum_{\mathfrak{g} \in \mathcal{R}_K} \varphi\left(\frac{\langle b \rangle}{\mathfrak{g}}\right)\]
    where $\varphi$ is Euler's totient function for number fields which vanishes on non-integral ideals.
\end{lemma}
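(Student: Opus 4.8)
The plan is to count the reduced fractions with a fixed denominator $b$ by passing from fractions to ideals via $\frac{a}{b} \mapsto \langle a, b\rangle$, which by Lemma~\ref{lem:RedIfInsev} is always an inseverable ideal dividing $\langle b\rangle$. First I would fix $b \in \mathcal{O}_K \setminus \{0\}$ and note that any reduced fraction counted with denominator $b$ arises from some inseverable $\mathfrak{g} \mid \langle b \rangle$, and that by the counting convention tied to $\mathcal{R}_K$ we should only count those $\mathfrak{g}$ that equal the chosen representative of their class, i.e.\ $\mathfrak{g} \in \mathcal{R}_K$. For each such $\mathfrak{g}$, I would show that the number of $a \pmod{b}$ (equivalently $a \pmod{\langle b\rangle}$) with $\langle a, b\rangle = \mathfrak{g}$ is exactly $\varphi\!\left(\frac{\langle b\rangle}{\mathfrak{g}}\right)$, the number-field Euler totient of the complementary ideal, with the convention that this vanishes unless $\mathfrak{g} \mid \langle b \rangle$.

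The key step is this local count. I would argue as follows: $\langle a, b\rangle = \mathfrak{g}$ with $\mathfrak{g} \mid \langle b\rangle$ is equivalent to $a \in \mathfrak{g}$ together with $\gcd\!\left(\langle a\rangle \mathfrak{g}^{-1}, \langle b\rangle \mathfrak{g}^{-1}\right) = \mathcal{O}_K$. Writing $\langle b \rangle = \mathfrak{g}\mathfrak{c}$ and using that $a \in \mathfrak{g}$ means $a = g\alpha$-type relations are not available elementwise, I instead work modulo $\langle b\rangle$: the residues $a \pmod{\langle b\rangle}$ with $a \in \mathfrak{g}$ correspond bijectively to residues in $\mathfrak{g}/\mathfrak{g}\mathfrak{c}$, and via the isomorphism $\mathfrak{g}/\mathfrak{g}\mathfrak{c} \cong \mathcal{O}_K/\mathfrak{c}$ (Dedekind: $\mathfrak{g}\otimes -$ is invertible, or directly localize prime-by-prime) the coprimality condition $\langle a\rangle\mathfrak{g}^{-1} + \mathfrak{c} = \mathcal{O}_K$ translates into the image in $\mathcal{O}_K/\mathfrak{c}$ being a unit modulo each prime dividing $\mathfrak{c}$. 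The count of such residues is precisely $\varphi(\mathfrak{c}) = \varphi\!\left(\frac{\langle b\rangle}{\mathfrak{g}}\right)$.

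Finally I would assemble: summing over $\mathfrak{g} \in \mathcal{R}_K$ gives $\eta_K(b) = \sum_{\mathfrak{g}\in\mathcal{R}_K} \varphi\!\left(\frac{\langle b\rangle}{\mathfrak{g}}\right)$, and the terms with $\mathfrak{g} \nmid \langle b\rangle$ contribute zero by convention, so the sum is genuinely finite. I would also invoke Proposition~\ref{prop:ClassesOfFrations} to confirm that two distinct inseverable ideals $\mathfrak{g}_1 \neq \mathfrak{g}_2$ dividing $\langle b\rangle$ give genuinely distinct fractions at denominator $b$ (they cannot be the ``same'' fraction reappearing, since a fixed fraction appears at denominator $b$ for exactly one ideal $\langle a,b\rangle$), so there is no double-counting across the sum, and Proposition~\ref{prop:EveryIdealIsAFraction} together with the representative convention ensures each class of $\mathcal{R}_K$ is legitimately included.

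I expect the main obstacle to be the bijection in the middle paragraph: carefully justifying the identification $\mathfrak{g}/\mathfrak{g}\mathfrak{c} \cong \mathcal{O}_K/\mathfrak{c}$ \emph{compatibly} with the coprimality condition, i.e.\ checking that $\langle a, b\rangle = \mathfrak{g}$ (as opposed to a proper inseverable divisor of $\mathfrak{g}\mathfrak{c}$ strictly containing $\mathfrak{g}$) corresponds exactly to primitivity of the residue class, not merely divisibility. This is a local computation at each prime $\mathfrak{p} \mid \mathfrak{c}$, comparing $v_{\mathfrak{p}}(a)$ against $v_{\mathfrak{p}}(\mathfrak{g})$ and $v_{\mathfrak{p}}(\langle b\rangle)$, and the bookkeeping of which primes may divide $\mathfrak{g}$ but also $\mathfrak{c}$ needs care.
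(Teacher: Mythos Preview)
Your proposal is correct and follows essentially the same approach as the paper: both reduce to computing $\#\{a \pmod{b} : \langle a,b\rangle = \mathfrak{g}\}$ for each $\mathfrak{g} \in \mathcal{R}_K$ and identify this set with the units of $\mathcal{O}_K/\mathfrak{c}$, where $\mathfrak{c} = \langle b\rangle\mathfrak{g}^{-1}$. The only cosmetic difference is that the paper realizes the bijection concretely via multiplication by an element $\pi$ chosen (by CRT) so that $v_\mathfrak{p}(\pi) = v_\mathfrak{p}(\mathfrak{g})$ for every $\mathfrak{p} \mid b$, whereas you invoke the abstract $\mathcal{O}_K$-module isomorphism $\mathfrak{g}/\mathfrak{g}\mathfrak{c} \cong \mathcal{O}_K/\mathfrak{c}$; these are the same map, and your ``main obstacle'' is exactly the well-definedness and bijectivity check the paper carries out for $\psi$.
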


\begin{proof}
    By Lemma \ref{lem:RedIfInsev} as well as Propositions \ref{prop:ClassesOfFrations} and \ref{prop:EveryIdealIsAFraction}, we want to compute
   $$\# \{a \pmod{b} : \langle a,b \rangle = \mathfrak{g}\}.$$
     Let $\pi\in \mathfrak{p}^{e} \setminus \mathfrak{p}^{e+1}$ for primes $\mathfrak{p}$ with $\mathfrak{p}^e \mid \mid \mathfrak{g}$ and otherwise $\pi \notin \mathfrak{p}$ for primes with $\mathfrak{p} \mid b$ and $\mathfrak{p} \nmid \mathfrak{g}$.  Consider the map
    \begin{align*}
        \psi:\left\{a \pmod{\frac{b}{\mathfrak{g}}} : \left\langle a,\frac{b}{\mathfrak{g}} \right\rangle = \mathcal{O}_K\right\} \to \{a \pmod{b} : \langle a,b \rangle = \mathfrak{g}\},\\
        \text{defined by $f(a):=\pi a \pmod{b}$.}
    \end{align*}
    We claim $\psi$ is a bijection.  To check that $\psi$ is well-defined, let $\gamma \in \frac{b}{\mathfrak{g}}$.  Then $\psi(a+\gamma)=\pi a + \pi \gamma \equiv \pi a \pmod{b}=\psi(a)$.  For injectivity, suppose $\langle a_1,\frac{b}{\mathfrak{g}} \rangle=\langle a_2,\frac{b}{\mathfrak{g}} \rangle=1$.  Now if $\pi a_1 \equiv \pi a_2 \pmod{b}$, then $b \mid \pi(a_1-a_2)$ and by the choice of $\pi$, one has $\frac{b}{\mathfrak{g}} \mid (a_1-a_2)$, so $a_1 \equiv a_2 \pmod{\frac{b}{\mathfrak{g}}}$.  Finally, for surjectivity, let $a_2$ satisfy $\langle a_2,b \rangle = 1$.  Then by the definition of $\pi$, we see that the fractional ideal $\langle \frac{a_2}{\pi} \rangle$ contains no powers of primes dividing $b$ in its prime factorization.  Hence, there exists $a_1 \equiv \frac{a_2}{\mathfrak{g}} \pmod{\frac{b}{\mathfrak{g}}}$ with $\langle a_1, \frac{b}{\mathfrak{g}} \rangle =1,$ which clearly gives $\psi(a_1) \equiv a_2 \pmod{b}$.
\end{proof}

\begin{example}
    Let $K = \Q(\sqrt{-5})$, so $h_K = 2$ and we choose $\mathcal{R}_K = \{\mathcal{O}_K,\mathfrak{p}_2\}$ where $\mathfrak{p}_2 = \langle 2, 1 + \sqrt{-5}\rangle$. Choose $b = 2$. Invoking Lemma \ref{lem:NumberOfRedFracModB} we get that
    \[\eta_K(2) = \varphi(\langle 2 \rangle) + \varphi\left(\frac{\langle 2 \rangle}{\mathfrak{p}_2}\right) = \varphi(\mathfrak{p}_2^2) + \varphi(\mathfrak{p}_2) = 2 + 1 = 3.\]
    Noting that $N(2) = 4$, we quickly see that a representative system of residues modulo $\langle 2 \rangle$ is given by $\{0,1,\sqrt{-5},1 + \sqrt{-5}\}$. We check that among the corresponding ideals $\mathfrak{p}_2^2 = \langle 2 \rangle, \mathcal{O}_K, \mathcal{O}_K,$ and $\mathfrak{p}_2$ all but the first one are inseverable and hence are tied to reduced fractions. Furthermore, we see that the coprime residues classes are exactly those tied to $\mathcal{O}_K$, so $1$ and $\sqrt{-5}$, and the non-coprime reduced fraction is $\frac{1 + \sqrt{-5}}{2}$.
\end{example}

\section{Proof of Theorem \ref{theo:Densityestimate}}

In order to proceed with our considerations regarding density, we need explicit expressions of the associated Dirichlet series. Using Euler product expansions, the following lemma is straightforward. 

\begin{lemma} \label{lem:L-EulerPhi} Let $\chi \colon \mathrm{Cl}_K \to \C^\times$ be a class group character. Then we have 
\begin{align*}
\sum_{\mathfrak{a} \subseteq \mathcal{O}_K} \frac{\chi([\mathfrak{a}]) \varphi(\mathfrak{a})}{(N \mathfrak{a})^s} = \frac{L(s - 1; \chi)}{L(s; \chi)},
\end{align*}
where $\varphi$ is Euler's totient for number fields and 
\begin{align*}
L(s; \chi) := \sum_{\mathfrak{a} \subseteq \mathcal{O}_K} \frac{\chi([\mathfrak{a}])}{(N \mathfrak{a})^s}
\end{align*}
is the $L$-function corresponding to the character $\chi$. 
\end{lemma}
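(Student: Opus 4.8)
The plan is to prove Lemma \ref{lem:L-EulerPhi} by establishing the identity on the level of Euler products over prime ideals, since all three Dirichlet series involved ($L(s;\chi)$, $L(s-1;\chi)$, and the one on the left) converge absolutely in a common right half-plane (namely $\re(s) > 2$, where $\sum_{\mathfrak{a}} \varphi(\mathfrak{a})(N\mathfrak{a})^{-s}$ converges because $\varphi(\mathfrak{a}) \leq N\mathfrak{a}$ and $\sum_\mathfrak{a} (N\mathfrak{a})^{1-s}$ converges for $\re(s) > 2$), so they all factor as convergent products over primes $\mathfrak{p}$. The key input is multiplicativity: the function $\mathfrak{a} \mapsto \chi([\mathfrak{a}])\varphi(\mathfrak{a})$ is multiplicative on integral ideals, because $\chi$ is a homomorphism on the class group (hence $\chi([\mathfrak{a}\mathfrak{b}]) = \chi([\mathfrak{a}])\chi([\mathfrak{b}])$ always) and $\varphi$ is multiplicative on coprime ideals by the Chinese Remainder Theorem for $\mathcal{O}_K$. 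Therefore the Dirichlet series on the left equals $\prod_{\mathfrak{p}} \sum_{k \geq 0} \chi([\mathfrak{p}])^k \varphi(\mathfrak{p}^k)(N\mathfrak{p})^{-ks}$.

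Next I would compute the local factor at a single prime $\mathfrak{p}$. Writing $q := N\mathfrak{p}$, one has $\varphi(\mathfrak{p}^k) = q^k - q^{k-1} = q^{k-1}(q-1)$ for $k \geq 1$ and $\varphi(\mathcal{O}_K) = 1$. Setting $z := \chi([\mathfrak{p}]) q^{-s}$, the local sum becomes
\begin{align*}
1 + \sum_{k \geq 1} z^k q^{k-1}(q-1) = 1 + \frac{q-1}{q}\sum_{k \geq 1}(qz)^k = 1 + \frac{q-1}{q}\cdot\frac{qz}{1-qz} = \frac{1 - z}{1 - qz},
\end{align*}
valid when $|qz| < 1$, i.e. $\re(s) > 1$. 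Now $1 - z = 1 - \chi([\mathfrak{p}]) q^{-s}$ is exactly the inverse of the local Euler factor of $L(s;\chi)$, and $1 - qz = 1 - \chi([\mathfrak{p}]) q^{-(s-1)}$ is the inverse of the local factor of $L(s-1;\chi)$. Taking the product over all $\mathfrak{p}$ then gives $\prod_\mathfrak{p} \frac{(1-\chi([\mathfrak{p}])q^{-s})}{(1-\chi([\mathfrak{p}])q^{-(s-1)})} = \frac{L(s-1;\chi)}{L(s;\chi)}$, which is the claimed identity.

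There is no serious obstacle here; the only points requiring a word of care are (i) justifying the rearrangement into an Euler product, which follows from absolute convergence in $\re(s) > 2$ together with the multiplicativity of $\chi([\cdot])\varphi(\cdot)$ and unique factorization of ideals in the Dedekind domain $\mathcal{O}_K$; (ii) noting that $L(s;\chi)$ is nonvanishing and its Euler product converges in this region, so that dividing by it is legitimate; and (iii) the elementary geometric-series manipulation in the local computation, which converges precisely on the overlap $\re(s) > 2$ of all the regions involved. The final identity, initially established for $\re(s) > 2$, is the statement of the lemma as used later (and extends by analytic continuation wherever both sides make sense, though this is not needed for the proof of Theorem \ref{theo:Densityestimate}).
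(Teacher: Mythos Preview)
Your proof is correct and follows exactly the approach the paper indicates: the paper does not spell out a proof but simply remarks that the lemma is ``straightforward'' via Euler product expansions, and your argument carries this out cleanly, including the local computation $\sum_{k\geq 0}\chi([\mathfrak p])^k\varphi(\mathfrak p^k)(N\mathfrak p)^{-ks}=\frac{1-\chi([\mathfrak p])(N\mathfrak p)^{-s}}{1-\chi([\mathfrak p])(N\mathfrak p)^{-(s-1)}}$ and the justification of absolute convergence for $\re(s)>2$.
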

It is well-known that the $L$-functions $L(s;\chi)$ in Lemma \ref{lem:L-EulerPhi} have a holomorphic continuation to $\C \setminus \{1\}$ and a simple pole in $s=1$ if and only if the character $\chi$ is trivial. Indeed, they correspond to Hecke $L$-functions of the Hilbert class field $H$ of $K$ due to the isomorphism $\mathrm{Gal}(H/K) \cong \mathrm{Cl}_K$. The Euler product expansion further implies that $L(s;\chi) \not= 0$ if $\mathrm{Re}(s) > 1$. We can use these Dirichlet series to find the Dirichlet density of coprime fractions. Note first that we have by Lemma \ref{lem:L-EulerPhi} and Lemma \ref{lem:NumberOfRedFracModB}
\begin{align}
\nonumber \mathrm{H}_K(s) := \sum_{n \geq 1} \frac{\Tilde{\eta_K}(n)}{n^s} := \sum_{b \in \mathcal{O}_K/\mathcal{O}_K^{\times}} \frac{1}{Nb^s}\left(\sum_{\mathfrak{g} \in \mathcal{R}_K}\varphi\left(\frac{\langle b\rangle}{\mathfrak{g}}\right) \right) &= \sum_{\mathfrak{g} \in \mathcal{R}_K} \frac{1}{(N\mathfrak{g})^s} \sum_{\mathfrak{b} \in [\mathfrak{g}]^{-1}} \frac{\varphi(\mathfrak{b})}{(N\mathfrak{b})^s} \\
\label{eq:Dirichlet-series} &=\frac{1}{h_K}\sum_{\substack{\mathfrak{g} \in \mathcal{R}_K \\ \chi \in \widehat{\mathrm{Cl}_K}}} \frac{\chi([\mathfrak{g}])}{(N\mathfrak{g})^s} \frac{L(s-1; \chi)}{L(s; \chi)}.
\end{align}

For the counting method to define the density, it is natural to choose only representatives for $\mathcal{R}_K$ so that they have minimal norm in their class. That is because we count a fraction as soon as it ``appears'' for the first time and no later. In particular, it is always assumed that $\mathcal{O}_K \in \mathcal{R}_K$ in the following, since by Proposition \ref{prop:inseverable-ideals} such a system of representatives consists of inseverable ideals and $\mathcal{O}_K$ is the only inseverable principal ideal.  


\begin{proof}[Proof of Theorem \ref{theo:Densityestimate}] 

Recall \eqref{E:DirDensityCorpime}, and consider the Dirichlet series  
\begin{align*}
\Phi(s) := \sum_{n \geq 1} \frac{\Tilde{\varphi}(n)}{n^s} := \sum_{b \in \nicefrac{\mathcal{O}_K \setminus \{0\}}{\mathcal{O}_K^\times} } \frac{\varphi(\langle b \rangle)}{Nb^s} = \frac{1}{h_K}\sum_{\substack{\chi \in \widehat{\mathrm{Cl}_K}}} \frac{L(s-1; \chi)}{L(s; \chi)}.
\end{align*}
By Proposition \ref{P:Delange}, we therefore have
\begin{align*}
\sum_{n \leq x} \Tilde{\varphi}(n) \sim \frac{\mathrm{Res}_{s=2} \Phi(s) x^2}{2}, \qquad x \to \infty.
\end{align*}
On the other hand, a similar argument yields with \eqref{eq:Dirichlet-series}
\begin{align*}
\sum_{n \leq x} \Tilde{\eta_K}(n) \sim \frac{\mathrm{Res}_{s=2} \mathrm{H}_K(s) x^2}{2}.
\end{align*}
Now note that 
\begin{align*}
\mathrm{Res}_{s = 2} \Phi(s) = \frac{\mathrm{Res}_{s=2} L(s-1; \chi_0)}{h_K L(2; \chi_0)}, \quad \mathrm{Res}_{s = 2} \mathrm{H}_K(s) = \sum_{\mathfrak{g} \in \mathcal{R}_K} \frac{1}{(N\mathfrak{g})^2} \frac{\mathrm{Res}_{s=2} L(s-1; \chi_0)}{h_K L(2; \chi_0)}.
\end{align*}
Now the claim follows. 
\end{proof}
By Proposition \ref{prop:NatDensImpliesDirichGeneral} the existence of the natural density implies a Dirichlet density with the same value. For this reason we can simply speak about density without further specification. 
\begin{corollary}\label{coro:DensityBounds}
     The density in Theorem \ref{theo:Densityestimate} is at least $\frac{4}{3 + h_K}$ where $h_K$ is the class number of $K$. If there exists a $B \in \N$ such that there is no non-principal ideal $\mathfrak{a} \subset \mathcal{O}_K$ with $N(\mathfrak{a}) \leq B$, this bound improves to $\frac{(B + 1)^2}{B^2 + 2B + h_K}$. Moreover, if the elements of $\mathcal{R}_K$ are of minimal norm within their ideal class, an upper bound for the density is given by
     \[\frac{1}{1 + \frac{h_K - 1}{|d_K|}\left(\frac{n^n\pi^{r_2}}{n!4^{r_2}}\right)^2}\]
     where $d_K$ is the discriminant of $K$ over $\Q$, $n = [K:\Q]$, and $r_2$ is the number of pairs of complex embeddings of $K$.
\end{corollary}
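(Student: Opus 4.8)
\textbf{Proof proposal for Corollary \ref{coro:DensityBounds}.}

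The plan is to analyze the sum $S(\mathcal{R}_K):=\sum_{\mathfrak{g}\in\mathcal{R}_K}\frac{1}{(N\mathfrak{g})^2}$, whose reciprocal is the density by Theorem \ref{theo:Densityestimate}, by separating the contribution of the identity class from the rest. Since $\mathcal{O}_K\in\mathcal{R}_K$ always, we have $S(\mathcal{R}_K)=1+\sum_{\mathfrak{g}\in\mathcal{R}_K\setminus\{\mathcal{O}_K\}}\frac{1}{(N\mathfrak{g})^2}$, a sum of $h_K-1$ terms. For the lower bound on the density we need an \emph{upper} bound on $S(\mathcal{R}_K)$: each nonprincipal representative has norm at least $2$ (no ideal of norm $1$ is nonprincipal, as norm-$1$ ideals equal $\mathcal{O}_K$), giving $S(\mathcal{R}_K)\le 1+(h_K-1)/4=(h_K+3)/4$, whence the density is $\ge 4/(h_K+3)$. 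If additionally no nonprincipal ideal has norm $\le B$, then each of the $h_K-1$ nonidentity representatives has norm $\ge B+1$, so $S(\mathcal{R}_K)\le 1+(h_K-1)/(B+1)^2=\big((B+1)^2+h_K-1\big)/(B+1)^2=(B^2+2B+h_K)/(B+1)^2$, giving the improved bound $(B+1)^2/(B^2+2B+h_K)$.

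For the upper bound on the density we need a \emph{lower} bound on $S(\mathcal{R}_K)$, hence an \emph{upper} bound on each $N\mathfrak{g}$ for $\mathfrak{g}$ nonprincipal of minimal norm in its class. This is exactly where the Minkowski bound enters: every ideal class contains an integral ideal of norm at most the Minkowski constant $M_K=\frac{n!}{n^n}\left(\frac{4}{\pi}\right)^{r_2}\sqrt{|d_K|}$, so a minimal-norm representative satisfies $N\mathfrak{g}\le M_K$, i.e. $\frac{1}{(N\mathfrak{g})^2}\ge \frac{1}{M_K^2}=\frac{1}{|d_K|}\left(\frac{n^n}{n!}\right)^2\left(\frac{\pi}{4}\right)^{2r_2}=\frac{1}{|d_K|}\left(\frac{n^n\pi^{r_2}}{n!\,4^{r_2}}\right)^2$. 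Summing over the $h_K-1$ nonidentity classes yields $S(\mathcal{R}_K)\ge 1+\frac{h_K-1}{|d_K|}\left(\frac{n^n\pi^{r_2}}{n!\,4^{r_2}}\right)^2$, and taking reciprocals gives the claimed upper bound on the density.

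I do not anticipate a genuine obstacle here; the only points requiring a little care are (i) recording that norm-$1$ integral ideals are exactly $\mathcal{O}_K$, so "nonprincipal" forces norm $\ge 2$ (and $\ge B+1$ under the hypothesis), and (ii) citing the Minkowski bound in the correct normalization $M_K=\frac{n!}{n^n}(4/\pi)^{r_2}\sqrt{|d_K|}$ and checking that squaring its reciprocal reproduces the stated expression $\frac{1}{|d_K|}\big(\frac{n^n\pi^{r_2}}{n!\,4^{r_2}}\big)^2$. Both are standard, so the proof is essentially the two reciprocal-and-sum computations above; one should also note the bounds are attained in the obvious extremal configurations (e.g. the first when every nonprincipal class has a norm-$2$ representative).
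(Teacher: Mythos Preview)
Your proposal is correct and follows essentially the same approach as the paper: separate the $\mathcal{O}_K$-term from the remaining $h_K-1$ terms in $\sum_{\mathfrak{g}\in\mathcal{R}_K}(N\mathfrak{g})^{-2}$, bound each nonprincipal representative from below by $2$ (resp.\ $B+1$) for the lower density bound, and from above by the Minkowski constant for the upper density bound. The only cosmetic difference is that the paper first establishes the $B$-version and then specializes to $B=1$, whereas you treat the unconditional case separately; the content is identical.
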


\begin{proof}
    Note that the existence of such a $B$ implies $N \mathfrak{g} \geq B + 1$ for all $\mathfrak{g} \in \mathcal{R}_K \setminus \{\mathcal{O}_K\}$. By Theorem \ref{theo:Densityestimate}, we have
    \[\delta_{\mathrm{nat},K}(\mathscr{C}_K) = \frac{1}{\sum_{\mathfrak{g} \in \mathcal{R}_K} \frac{1}{(N\mathfrak{g})^2}} \geq \frac{1}{1 + (h_K - 1) \frac{1}{(B + 1)^2}} = \frac{(B + 1)^2}{B^2 + 2B + h_K},\]
    as claimed. Replacing $B$ by 1 yields the unconditional lower bound.

    For the upper bound, we simply use Minkowski's bound which can be found in \cite{Jarvis} on p. 161. That is, every ideal class contains an integral ideal of norm not exceeding
    \[\sqrt{|d_K|}\left(\frac{4}{\pi}\right)^{r_2}\frac{n!}{n^n},\]
    yielding the claim.
\end{proof}
\begin{remark} \
    \begin{enumerate}
        \item The example $K = \Q(\sqrt{-5})$ shows that this lower bound is sharp even if $K$ does not have class number 1. As $h_{\Q(\sqrt{-5})} = 2$ and the ideal $\mathfrak{p}_2 := \langle 2, 1 + \sqrt{-5}\rangle$ is non-principal with $N(\mathfrak{p}_2) = 2$, we choose $\mathcal{R}_K = \{\mathcal{O}_K, \mathfrak{p}_2\}$. This yields
        \[\delta_{\mathrm{nat},\Q(\sqrt{-5})}(\mathscr{C}_{\Q(\sqrt{-5})}) = \frac{1}{\frac{1}{1^2} + \frac{1}{2^2}} = \frac{4}{5}.\]
        We evaluate the coprimality for some $b$ of small norm in Table \ref{tab:NumEvidDensityQsqrt(-5)}.
        \begin{table}[h]
            \centering
            \[
            \begin{array}{c|c|c|c|c|c|c|c}
                b & N(b) & \varphi(\langle b \rangle) &\varphi\left(\frac{\langle b \rangle}{\mathfrak{p}_2}\right) & \eta_K(b) & \frac{\varphi(\langle b \rangle)}{\eta_K(b)} &  \frac{\sum_{N(c) \leq N(b)} \varphi(\langle c \rangle)}{\sum_{N(c) \leq N(b)} \eta_K(c)} & \approx\\
                \hline
                1 & 1 & 1 & 0 & 1 & 1 & 1 & 1\\
                2 & 4 & 2 & 1 & 3 & \nicefrac{2}{3} & \nicefrac{3}{4} & .75\\
                \sqrt{-5} & 5 & 4 & 0 & 4 & 1 & \nicefrac{7}{8} & .875\\
                1 + \sqrt{-5} & 6 & 2 & 2 & 4 & \nicefrac{1}{2} & & \\
                1 - \sqrt{-5} & 6 & 2 & 2 & 4 & \nicefrac{1}{2} & \nicefrac{11}{16} & .6875\\
                3 & 9 & 4 & 0 & 4 & 1 & & \\
                2 + \sqrt{-5} & 9 & 6 & 0 & 6 & 1 & & \\
                2 - \sqrt{-5} & 9 & 6 & 0 & 6 & 1 & \nicefrac{27}{32} & .8437\\
                3 + \sqrt{-5} & 14 & 6 & 6 & 12 & \nicefrac{1}{2} & & \\
                3 - \sqrt{-5} & 14 & 6 & 6 & 12 & \nicefrac{1}{2} & \nicefrac{39}{56} & .6964\\
                4 & 16 & 8 & 4 & 12 & \nicefrac{2}{3} & \nicefrac{47}{68} & .6911 \\
                2\sqrt{-5} & 20 & 8 & 4 & 12 & \nicefrac{2}{3} & \nicefrac{55}{80} & .6875 \\
                1 + 2\sqrt{-5} & 21 & 12 & 0 & 12 & 1 & & \\
                1 - 2\sqrt{-5} & 21 & 12 & 0 & 12 & 1 & & \\
                4 + \sqrt{-5} & 21 & 12 & 0 & 12 & 1 & & \\
                4 - \sqrt{-5} & 21 & 12 & 0 & 12 & 1 & \nicefrac{103}{128} & .8046
            \end{array}
            \]
            \caption{Some numerical values for $K = \Q(\sqrt{-5})$}
            \label{tab:NumEvidDensityQsqrt(-5)}
        \end{table}
        See Figure \ref{fig:1} for a plot of the first 10\hspace{.7mm}000 values.
        \begin{figure}[H]
            \centering
            \includegraphics[width=120mm,height= 55mm]{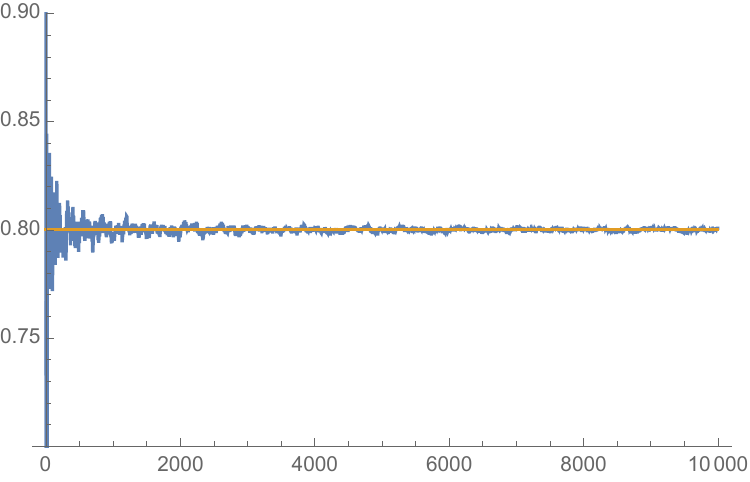}
            \caption{The first 10\hspace{.7mm}000 values of $\frac{\sum_{N(c) \leq N(b)} \varphi(\langle c \rangle)}{\sum_{N(c) \leq N(b)} \eta_K(c)}$}
            \label{fig:1}
        \end{figure}
        Note that in the case of $h_K = 2$, the function $\eta_K$ is multiplicative which notably simplified computations.
        \item Depending on the degree of the extension $K/\Q$, the lower bound of Corollary \ref{coro:DensityBounds} can easily be improved. That is, if $[K:\Q] = n$, then there are at most $n$ prime ideals of a given norm which also limits the number of ideals of any given norm. E.g., in the case of a quadratic number field of class number 5, we have
        \[\delta_{\mathrm{nat},K}(\mathscr{C}_K) \geq \frac{1}{\frac{1}{1^2} + 2 \cdot \frac{1}{2^2} + 2 \cdot \frac{1}{3^2}} = \frac{18}{31}\]
        instead of the bound $\frac{1}{2}$, as provided by the corollary.
        \item Let $d \equiv 5 \pmod 8$, so 2 is inert in $\Q(\sqrt{d})/\Q$. In this case, the lower bound immediately improves to $\frac{9}{8 + h_{\Q(\sqrt{d})}}$. Using that an odd rational is inert in $\Q(\sqrt{d})$ if and only if $\leg{d}{p} = -1$ we can apply the Chinese Remainder Theorem to find a sequence $(d_n)_{n \in \N}$ such that the density of coprime fractions in $\Q(\sqrt{d_n})$ approaches 1 as $n \to \infty$ if we can bound $h_{\Q(\sqrt{d_n})}$ suitably. Whether this last condition is achievable has remained unknown to the authors.
    \end{enumerate}
\end{remark}

\section{Further results}

\subsection{Some more ideal theory}
    For a number field $K$, let $\mathcal{I}_K$ be the set of inseverable ideals of $K$. If $K$ has class number 2, $\mathfrak{I}_K$ has a plain description. It holds
    \[\mathfrak{I}_K = \left\{\mathfrak{p} \in \operatorname{Spec}(\mathcal{O}_K) \setminus \{\langle 0 \rangle\}: [\mathfrak{p}] \neq [1]\right\} \cup \{\mathcal{O}_K\}\]
    where $\operatorname{Spec}(\mathcal{O}_K)$ is the spectrum of $\mathcal{O}_K$. In the more general case, we can derive the set of inseverable ideals by applying a greedy algorithm starting with the set above for arbitrary $K$ and taking products that are still inseverable.

    Furthermore, we have the following key proposition.

\begin{proposition}
    \label{prop:InvarianceModTransf}
    Let $K$ be any number field and $\tau = \frac{a}{b}$ with $a, b \in \mathcal{O}_K, b \neq 0$. Let further $M \in \operatorname{SL}_2(\Z)$. Then $\langle a', b' \rangle = \langle a, b \rangle$ where $\frac{a'}{b'} = M\tau$.
\end{proposition}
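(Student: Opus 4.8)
The plan is to reduce the claim to the two generators of $\operatorname{SL}_2(\Z)$ and then verify the statement directly for each. Recall that $\operatorname{SL}_2(\Z)$ is generated by $T = \left(\begin{smallmatrix} 1 & 1 \\ 0 & 1\end{smallmatrix}\right)$ and $S = \left(\begin{smallmatrix} 0 & -1 \\ 1 & 0\end{smallmatrix}\right)$. Since the ideal $\langle a, b\rangle$ attached to a fraction $\tau = \frac{a}{b}$ is well-defined on equivalence classes of pairs (a reduced-fraction representative and any $\mathcal{O}_K$-linear combination generating the same ideal), it suffices to show that the ideal is unchanged under the action of each generator, and that this invariance composes. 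More precisely, I would first observe that if $M_1, M_2 \in \operatorname{SL}_2(\Z)$ both preserve the ideal attached to every fraction, then so does $M_1 M_2$: writing $M_2 \tau = \frac{a'}{b'}$ with $\langle a', b'\rangle = \langle a, b\rangle$, and then $M_1(M_2\tau) = \frac{a''}{b''}$ with $\langle a'', b''\rangle = \langle a', b'\rangle$, transitivity of equality gives $\langle a'', b''\rangle = \langle a, b\rangle$. The same bootstrap handles inverses, so it is enough to treat $T$ and $S$.

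For the generator $S$, we have $S\tau = -\frac{1}{\tau} = \frac{-b}{a}$, so $(a', b') = (-b, a)$ and clearly $\langle -b, a\rangle = \langle a, b\rangle$. For the generator $T$, we have $T\tau = \tau + 1 = \frac{a + b}{b}$, so $(a', b') = (a+b, b)$, and $\langle a + b, b\rangle = \langle a, b\rangle$ since $a + b \in \langle a, b\rangle$ and $a = (a+b) - b \in \langle a+b, b\rangle$. In each case the new numerator and denominator still lie in $\mathcal{O}_K$ because the matrix entries are rational integers, so the expression $M\tau$ genuinely has a representation with integral numerator and denominator. A mild point to address is that $\frac{a'}{b'}$ obtained this way need not be the \emph{reduced} representative of $M\tau$; but the ideal $\langle a', b'\rangle$ does not change when one passes to the reduced form by dividing out a common factor only if that factor is a unit — and the proposition as stated is about the ideal generated by \emph{some} representing pair, consistent with the Remark following Proposition 3.6, so no reduction is needed.

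The only real subtlety, and the step I would be most careful with, is confirming that the map $(a,b) \mapsto \langle a, b\rangle$ is genuinely well-defined under the matrix action regardless of which representative pair we start from, i.e. that if $\frac{a_1}{b_1} = \frac{a_2}{b_2}$ then $\langle a_1, b_1\rangle$ and $\langle a_2, b_2\rangle$ differ by multiplication by a principal fractional ideal in a way compatible with the $\operatorname{SL}_2(\Z)$-action. This is exactly the content of the Remark after Proposition 3.6: for any $c \in \mathcal{O}_K$, $\langle ca, cb\rangle = \langle c\rangle\langle a, b\rangle$, and the matrix action on $\tau$ commutes with simultaneous scaling of $(a,b)$ because $M\left(\frac{ca}{cb}\right) = M\left(\frac{a}{b}\right)$. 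Thus the generator-wise computations above, which were carried out for a fixed pair, transfer to any other pair representing the same $\tau$. Assembling these pieces — the generator reduction, the two one-line verifications for $S$ and $T$, and the scaling-compatibility remark — completes the proof.
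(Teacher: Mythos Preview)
Your proof is correct and follows essentially the same route as the paper: reduce to the generators $T$ and $S$ of $\operatorname{SL}_2(\Z)$ and verify the ideal identity directly for each, noting $\langle a+b,b\rangle = \langle a,b\rangle$ and $\langle -b,a\rangle = \langle a,b\rangle$. The additional care you take with composition, inverses, and representative-independence is sound but goes beyond what the paper records; its proof is the bare two-line generator check.
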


\begin{proof}
    We only have to check the proposition for the well-known generators 
$$ T := \begin{pmatrix} 1 & 1 \\ 0 & 1\end{pmatrix}, \quad S := \begin{pmatrix} 0 & -1 \\ 1 & 0\end{pmatrix}$$
of the full modular group. It is $T\tau = \frac{a + b}{b}$ and $\langle a + b, b \rangle = \langle a, b \rangle$ as well as $S\tau = - \frac{b}{a}$ and $\langle -b, a \rangle = \langle a, b \rangle$. 
\end{proof}

\begin{remark}
    Replacing $\operatorname{SL}_2(\Z)$ in Proposition \ref{prop:InvarianceModTransf} by $\operatorname{SL}_2(\mathcal{O}_K)$ leads to the stronger statement
    \[\mathrm{Cl}(K) \cong \nicefrac{\mathbb{P}^1(K)}{\operatorname{SL}_2(\mathcal{O}_K)},\]
    cf. \cite[Lemma 1.3]{bru123} for a proof in the case of a real quadratic field.
\end{remark}

\subsection{Quadratic fields}

Recall the following definition which will be useful later on in this chapter.

\begin{definition}
    Let $K$ be a number field. An \textit{order} $\mathcal{O}$ is a subring of $K$ being finitely generated over $\Z$ and containing a $\Q$-basis of $K$.
\end{definition}

Because of numerous additional structures, the case of quadratic extensions is of special interest. We briefly recall some basic facts. Let $K = \Q(\sqrt{N})$ be a quadratic field with $N \in \Z \setminus \{0,1\}$ square-free. Then the {\it discriminant} $d_K$ of $K$ is given by 
\begin{align*}
d_K := \begin{cases} N, & \qquad \text{if } N \equiv 1 \pmod{4}, \\ 4N, & \qquad \text{otherweise}.\end{cases}
\end{align*}
Of course we have $K = \Q(\sqrt{d_K})$. One can show that 
\begin{align*}
\mathcal{O}_K = \begin{cases} \Z\left[ \sqrt{N}\right], & \qquad \text{if } N \not\equiv 1 \pmod{4}, \\ \Z\left[ \frac{1+\sqrt{N}}{2}\right], & \qquad \text{if } N \equiv 1 \pmod{4}. \end{cases}
\end{align*}
Next, we want to focus on the case of {\it imaginary quadratic fields}, i.e., $N < 0$. To any $\tau \in K \cap \HH$ we can assign a unique quadratic form $Q_\tau(x,y) := Ax^2 + Bxy + Cy^2$ with $\gcd(A,B,C) = 1$ ($Q_\tau$ is primitive) and $A > 0$ ($Q_\tau$ is positive definite), such that $Q_\tau(\tau,1) = A\tau^2 + B\tau + C = 0$. Note that $D(Q_\tau) := B^2 - 4AC$ is the discriminant of the quadratic form $Q_\tau$, and is therefore also called the discriminant $D(\tau)$ of $\tau$. Moreover, any quadratic form $Q_\tau$ can be mapped to an ideal class. In the case that $D(\tau) = d_K$, this is especially simple. 

\begin{proposition}[see \cite{Cox}, Theorem 5.30 on p. 101] \label{prop:Cox-prop-fundamentalDiscriminant} Let $K$ be an imaginary quadratic field of discriminant $d_K$. Then we have the following:
\begin{enumerate}
\item If $Q(x,y) = Ax^2 + Bxy + Cy^2$ is a positive definite quadratic form of discriminant $B^2 - 4AC = d_K$, then $\langle A, \frac{-B+\sqrt{d_K}}{2}\rangle_\Z$ is an ideal of $\mathcal{O}_K$. 
\item The map $\Xi_{d_K}$ sending $Q(x,y)$ to $\langle A, \frac{-B+\sqrt{d_K}}{2}\rangle_\Z$ induces an isomorphism of the form class group $C(d_K)$ and the ideal class group $\mathrm{Cl}_K$.
\end{enumerate}
\end{proposition}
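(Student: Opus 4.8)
The plan is to run the classical dictionary between binary quadratic forms and lattices (equivalently, fractional ideals) of $\mathcal{O}_K$ via CM points in $\HH$. Throughout, $d_K<0$ denotes the fundamental discriminant, and I would first note that every form of discriminant $d_K$ is then automatically primitive: if $e\mid\gcd(A,B,C)$, then $e^2\mid d_K$, which forces $e=1$ by the squarefreeness properties of a fundamental discriminant. Hence one may work with all forms of discriminant $d_K$ and need not carry the primitivity hypothesis separately.

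For part (1) I would set $\tau_Q:=\frac{-B+\sqrt{d_K}}{2A}\in\HH$, the root of $Ax^2+Bx+C$ in the upper half-plane, so that $\langle A,\frac{-B+\sqrt{d_K}}{2}\rangle_\Z=A(\Z+\Z\tau_Q)$. From $A\tau_Q^2+B\tau_Q+C=0$ one has $\tau_Q^2=-\frac BA\tau_Q-\frac CA$; expanding $x\tau_Q$ for a general $x=m+n\tau_Q\in K$ in the basis $1,\tau_Q$, the requirement $x(\Z+\Z\tau_Q)\subseteq\Z+\Z\tau_Q$ reduces to $A\mid nB$ and $A\mid nC$, hence $A\mid n$ since $\gcd(A,B,C)=1$. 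Thus the ring of multipliers of $\Z+\Z\tau_Q$ equals $\Z+\Z A\tau_Q=\Z[\frac{-B+\sqrt{d_K}}{2}]$, which is the order of discriminant $d_K$, namely $\mathcal{O}_K$. Therefore $\Z+\Z\tau_Q$ is a proper fractional $\mathcal{O}_K$-ideal, so $\mathfrak{a}_Q:=A(\Z+\Z\tau_Q)$ is a fractional ideal, and it is integral because $A\in\Z\subseteq\mathcal{O}_K$ and $\frac{-B+\sqrt{d_K}}{2}\in\mathcal{O}_K$. In particular $\langle A,\frac{-B+\sqrt{d_K}}{2}\rangle_\Z$ is already the $\mathcal{O}_K$-ideal $\langle A,\frac{-B+\sqrt{d_K}}{2}\rangle$, which I will use below.

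For part (2) I would check four things. \emph{Well-definedness.} If $Q'=Q\circ\gamma$ with $\gamma\in\SL_2(\Z)$, a direct computation on the roots shows $\tau_{Q'}$ and $\tau_Q$ lie in one $\SL_2(\Z)$-orbit of $\HH$, so $\Z+\Z\tau_Q$ and $\Z+\Z\tau_{Q'}$ are homothetic; since $\mathfrak{a}_Q$ and $\mathfrak{a}_{Q'}$ are rational multiples of these lattices, $[\mathfrak{a}_Q]=[\mathfrak{a}_{Q'}]$ in $\mathrm{Cl}_K$. \emph{Injectivity.} Conversely, if $\lambda(\Z+\Z\tau)=\Z+\Z\tau'$ with $\tau,\tau'\in\HH$, then $\tau=\frac{a\tau'+b}{c\tau'+d}$ for a matrix of determinant $\pm1$, and the determinant must be $+1$ because a M\"obius map of determinant $-1$ would send $\HH$ to the lower half-plane; so $\tau,\tau'$ share an $\SL_2(\Z)$-orbit, and hence $[\mathfrak{a}_Q]=[\mathfrak{a}_{Q'}]$ forces $Q\sim Q'$. (This is where positive definiteness is essential: it pins the orientation, which is why the target is the $\SL_2(\Z)$ form class group and not a quotient by $\GL_2(\Z)$.) \emph{Surjectivity.} An arbitrary ideal class contains an integral ideal $\mathfrak{a}$; pick a $\Z$-basis $\alpha,\beta$ with $\tau:=\beta/\alpha\in\HH$, so $\mathfrak{a}=\alpha(\Z+\Z\tau)$ and $\tau$ is a root of a form $Q=(A,B,C)$ with $A>0$; since $\mathfrak{a}$ is a proper $\mathcal{O}_K$-ideal, its ring of multipliers is $\mathcal{O}_K$, which by the computation in part (1) forces $B^2-4AC=d_K$, and then $\Xi_{d_K}([Q])=[\mathfrak{a}]$. \emph{Homomorphism property.} Using the classical fact that any two form classes admit concordant (``united'') representatives, I would reduce to $Q_1=(A_1,B,A_2C)$ and $Q_2=(A_2,B,A_1C)$ with $\gcd(A_1,A_2,B)=1$, whose Gauss composite is $Q_1\ast Q_2=(A_1A_2,B,C)$, and then verify $\langle A_1,\theta\rangle\langle A_2,\theta\rangle=\langle A_1A_2,\theta\rangle$ with $\theta:=\frac{-B+\sqrt{d_K}}{2}$: the inclusion $\subseteq$ is immediate since $\theta^2\in\langle\theta\rangle$, and $\supseteq$ follows by writing $\theta=uA_1\theta+vA_2\theta+wB\theta$ from B\'ezout and using $B\theta=-\theta^2-A_1A_2C$ to see that $B\theta$, hence $\theta$, lies in the product.

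The hardest part will be the homomorphism step: setting up Gauss composition carefully and, above all, showing that any two form classes can be represented by concordant forms needs the standard elementary input (simultaneous representation lemmas for forms of the same discriminant). A secondary but pervasive difficulty is to track orientations through the first three steps so that the correspondence genuinely lands in $C(d_K)$, with equivalence by $\SL_2(\Z)$, rather than only up to $\GL_2(\Z)$. Since this is Theorem~5.30 of \cite{Cox}, in the paper itself one simply cites that reference.
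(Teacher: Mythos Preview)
Your sketch is a correct outline of the classical dictionary, and the individual steps (multiplier ring computation, $\SL_2(\Z)$-equivariance of $\tau_Q$, the concordant-form computation $\langle A_1,\theta\rangle\langle A_2,\theta\rangle=\langle A_1A_2,\theta\rangle$ via $\theta^2+B\theta+A_1A_2C=0$ and B\'ezout on $\gcd(A_1,A_2,B)=1$) are all sound. As you yourself note at the end, the paper does not supply a proof here: the proposition is stated with an explicit citation to \cite{Cox}, Theorem~5.30, and is used as a black box. So there is nothing to compare against; your write-up simply reconstructs what Cox proves.
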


With this proposition we can show that the assignment of an ideal class for $\tau$ is identical with respect to quadratic forms or its reduction type in the case of the fundamental discriminant. 

\begin{proposition} \label{prop:XiEqualsReductionFundamentalDiscriminant} Let $\tau = \frac{a}{b} \in K \cap \HH$ have discriminant $d_K$. Then $\left[\Xi_{d_K}(Q_\tau)\right] = \left[\left< a,b\right>\right]$. 
\end{proposition}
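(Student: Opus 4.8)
The plan is to rewrite both $\Xi_{d_K}(Q_\tau)$ and $\langle a,b\rangle$ as explicit fractional ideals of $\mathcal{O}_K$ that differ only by a scalar from $K^{\times}$, so that they represent the same class. Write $Q_\tau(x,y)=Ax^2+Bxy+Cy^2$ with $A>0$, $\gcd(A,B,C)=1$ and $B^2-4AC=D(\tau)=d_K$. From $Q_\tau(\tau,1)=0$, $\tau$ is one of the two roots $\frac{-B\pm\sqrt{d_K}}{2A}$; since $\tau\in\HH$, $A>0$, and $\sqrt{d_K}\in\HH$ (as $d_K<0$), necessarily $\tau=\frac{-B+\sqrt{d_K}}{2A}$, so that $A\tau=\frac{-B+\sqrt{d_K}}{2}$ and hence
\[
\Xi_{d_K}(Q_\tau)=\left\langle A,\ \frac{-B+\sqrt{d_K}}{2}\right\rangle_{\Z}=\langle A,A\tau\rangle_{\Z}.
\]

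Next I would combine two elementary observations. On the one hand, by part (1) of Proposition \ref{prop:Cox-prop-fundamentalDiscriminant} applied to $Q_\tau$ (which has discriminant exactly $d_K$ by hypothesis), the $\Z$-lattice $\mathfrak{a}:=\langle A,A\tau\rangle_{\Z}$ is an ideal of $\mathcal{O}_K$, in particular stable under multiplication by $\mathcal{O}_K$; being $\mathcal{O}_K$-stable and generated over $\Z$ by $A$ and $A\tau$, it coincides with the $\mathcal{O}_K$-module they generate, namely $\mathcal{O}_K A+\mathcal{O}_K A\tau=A(\mathcal{O}_K+\mathcal{O}_K\tau)$. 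On the other hand, directly from $\tau=a/b$ with $a,b\in\mathcal{O}_K$ and $b\neq 0$,
\[
\langle a,b\rangle=\mathcal{O}_K a+\mathcal{O}_K b=b\bigl(\mathcal{O}_K\tau+\mathcal{O}_K\bigr).
\]
Putting these together gives $\Xi_{d_K}(Q_\tau)=\mathfrak{a}=\frac{A}{b}\,\langle a,b\rangle$, and since $\frac{A}{b}\in K^{\times}$ the fractional ideals $\Xi_{d_K}(Q_\tau)$ and $\langle a,b\rangle$ differ by the principal ideal $\left\langle\frac{A}{b}\right\rangle$; hence $[\Xi_{d_K}(Q_\tau)]=[\langle a,b\rangle]$, as claimed.

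I expect the only delicate point — and the sole place the hypothesis $D(\tau)=d_K$ is genuinely used — to be the step asserting that $\mathfrak{a}=\langle A,A\tau\rangle_{\Z}$ is an honest $\mathcal{O}_K$-ideal, i.e. the appeal to Proposition \ref{prop:Cox-prop-fundamentalDiscriminant}(1). If one only had $D(\tau)=f^2 d_K$ with $f>1$, then $\langle A,A\tau\rangle_{\Z}$ would be stable merely under the order of conductor $f$, the identity $\langle A,A\tau\rangle_{\Z}=A(\mathcal{O}_K+\mathcal{O}_K\tau)$ would fail, and the natural home of $[\Xi(Q_\tau)]$ would not even be $\mathrm{Cl}_K$. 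A secondary care point is the sign: one must take the root of $Q_\tau(x,1)$ lying in $\HH$ so that $A\tau$ carries the $+\sqrt{d_K}$ appearing in the definition of $\Xi_{d_K}$; the remaining manipulations are routine bookkeeping.
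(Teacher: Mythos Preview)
Your proof is correct and follows essentially the same route as the paper's: both identify $\tau=\frac{-B+\sqrt{d_K}}{2A}$, invoke Proposition~\ref{prop:Cox-prop-fundamentalDiscriminant}(1) to promote the $\Z$-lattice $\langle A,A\tau\rangle_\Z$ to an $\mathcal{O}_K$-ideal, and then compare with $\langle a,b\rangle$. The only cosmetic difference is that the paper first invokes independence of the class from the chosen numerator and denominator (the remark after Proposition~\ref{prop:ClassesOfFrations}) and then simply picks the convenient representation $\tau=\frac{A\tau}{A}$, whereas you keep $a,b$ arbitrary and factor out the scalar $A/b$; these amount to the same computation.
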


\begin{proof} By the remark after Proposition \ref{prop:ClassesOfFrations} the ideal class of interest does not depend on the choice of $a$ and $b$. We have
\begin{align*}
\tau = \frac{-B+\sqrt{d_K}}{2A}
\end{align*}
with $d_K = B^2 - 4AC$ and $Q_\tau(x,y) = Ax^2 + Bxy + Cy^2$, so by the first item of  Proposition \ref{prop:Cox-prop-fundamentalDiscriminant}
\begin{align*}
\left< A, \frac{-B+\sqrt{d_K}}{2}\right> = \left< A, \frac{-B+\sqrt{d_K}}{2}\right>_\Z = \Xi_{d_K}(Q_\tau),
\end{align*}
yielding the claim.
\end{proof}
We now generalize this result to arbitrary discriminants $D = f^2 d_K$, where $f = [\mathcal{O}_K : \mathcal{O}]$ is the conductor of the order $\mathcal{O}$ of discriminant $D$.

Proposition \ref{prop:Cox-prop-fundamentalDiscriminant} is in fact a special case of the statement for a general order $\mathcal{O}$.

\begin{proposition}[cf. \cite{Cox}, Theorem 7.7 on pp. 123--124] \label{prop:Cox-prop-non-fundamentalDiscriminant} Let $K$ be an imaginary quadratic field and $\mathcal{O}$ be an order in $K$ of discriminant $D = f^2d_K$, i.e., $\mathcal{O} = \left<1, f \omega_K\right>_\Z$. Then we have the following:
\begin{enumerate}
\item If $Q(x,y) = Ax^2 + Bxy + Cy^2$ is a positive definite quadratic form of discriminant $B^2 - 4AC = D$, then $\langle A, \frac{-B+\sqrt{D}}{2}\rangle_\Z$ is an invertible ideal of $\mathcal{O}$. Also, we have $\mathcal{O} = \langle 1, \frac{-B+\sqrt{D}}{2}\rangle_\Z$.
\item The map $\Xi_D$ sending $Q(x,y)$ to $\langle A, \frac{-B+\sqrt{D}}{2}\rangle_\Z$ induces an isomorphism of the form class group $C(D)$ and the ideal class group \[\mathrm{Cl}(\mathcal{O}) = \nicefrac{\left\{\mathfrak{a} \subset K: \mathfrak{a} \textup{ is an invertible fractional $\mathcal{O}$-ideal} \right\}}{\left\{\mathfrak{a} \subset K: \mathfrak{a} \textup{ is an invertible principal fractional $\mathcal{O}$-ideal} \right\}}\]
of $\mathcal{O}$.
\end{enumerate}
\end{proposition}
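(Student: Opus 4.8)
The plan is to recognize this as \cite[Theorem~7.7]{Cox} and to carry out the three steps behind its proof, which upgrade the argument of Proposition~\ref{prop:Cox-prop-fundamentalDiscriminant} to the possibly non-maximal order $\mathcal{O}$. Throughout I would write $\tau := \frac{-B+\sqrt{D}}{2}$ and $\theta := \frac{-B+\sqrt{D}}{2A}$, so that $\tau = A\theta$, and note the two identities $\tau^2 + B\tau + AC = 0$ and $A\theta^2 + B\theta + C = 0$, both obtained by squaring $2A\theta + B = \sqrt{D}$.

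First I would show that $\Xi_D$ is well defined with values among invertible ideals. The identity $\tau^2 + B\tau + AC = 0$ exhibits $\Z[\tau]$ as an order of $K$ of discriminant $B^2 - 4AC = D$, hence $\Z[\tau] = \mathcal{O} = \langle 1,\tau\rangle_\Z$, which is the last assertion of item (1). Since $A\tau \in \langle A,\tau\rangle_\Z$ and $\tau^2 = -B\tau - AC \in \langle A,\tau\rangle_\Z$, the rank-two $\Z$-module $\langle A,\tau\rangle_\Z = A\langle 1,\theta\rangle_\Z$ is a fractional $\mathcal{O}$-ideal, and it is integral because $\tau \in \mathcal{O}$. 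The delicate point, and the only place primitivity of $Q$ enters, is invertibility: for an element $\theta$ whose minimal polynomial over $\Z$ is the primitive form $Ax^2 + Bx + C$, a short computation shows that the multiplier ring $\{x \in K : x\langle 1,\theta\rangle_\Z \subseteq \langle 1,\theta\rangle_\Z\}$ equals $\langle 1, A\theta\rangle_\Z = \mathcal{O}$, where $\gcd(A,B,C)=1$ is exactly what prevents this ring from being strictly larger than $\mathcal{O}$; hence $\langle 1,\theta\rangle_\Z$, and therefore its $K^\times$-multiple $\langle A,\tau\rangle_\Z$, is a proper, i.e.\ invertible, fractional $\mathcal{O}$-ideal.

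Next I would produce the inverse map and check descent to class groups. To a proper fractional $\mathcal{O}$-ideal $\mathfrak{a}$ with a $\Z$-basis $(\alpha,\beta)$ chosen so that $\beta/\alpha \in \HH$, one assigns $\tfrac{1}{N\mathfrak{a}}N(\alpha x - \beta y)$, which is primitive, positive definite, and of discriminant $D$; it is insensitive to scaling $\mathfrak{a}$ by $K^\times$, while an $\operatorname{SL}_2(\Z)$-change of basis replaces it by a properly equivalent form, so this descends to a map $\mathrm{Cl}(\mathcal{O}) \to C(D)$ that a routine verification shows to be a two-sided inverse of the map induced by $\Xi_D$; in particular $\Xi_D$ induces a bijection. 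To see that $\Xi_D$ is a group homomorphism I would use Dirichlet composition: any two form classes of discriminant $D$ have concordant representatives $Q_1 = A_1x^2 + Bxy + A_2Cy^2$ and $Q_2 = A_2x^2 + Bxy + A_1Cy^2$ with a common middle coefficient $B$ and $\gcd(A_1,A_2)=1$, whose composite is $Q_1\circ Q_2 = A_1A_2x^2 + Bxy + Cy^2$; then $\langle A_1,\tau\rangle_\Z\langle A_2,\tau\rangle_\Z = \langle A_1A_2,\ A_1\tau,\ A_2\tau,\ \tau^2\rangle_\Z = \langle A_1A_2,\tau\rangle_\Z = \Xi_D(Q_1\circ Q_2)$, using $\gcd(A_1,A_2)=1$ to absorb $A_1\tau,A_2\tau$ and $\tau^2 = -B\tau - A_1A_2C \in \langle A_1A_2,\tau\rangle_\Z$; since the principal form corresponds to $\mathcal{O}$, identities match as well.

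The main obstacle is the invertibility step. In the Dedekind situation of Proposition~\ref{prop:Cox-prop-fundamentalDiscriminant} every nonzero fractional ideal is invertible and forms a group automatically, but for a non-maximal order this fails, so one must compute the multiplier ring of $\langle A,\tau\rangle_\Z$ precisely and check that primitivity of $Q$ is exactly what pins it down to $\mathcal{O}$; the same subtlety governs the compatibility of Dirichlet composition with ideal multiplication and the surjectivity onto $\mathrm{Cl}(\mathcal{O})$. As all of this is done in detail in \cite{Cox}, in the paper I would simply refer to \cite[Theorem~7.7 and its proof]{Cox}.
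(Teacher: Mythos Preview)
Your proposal is correct and matches the paper's approach: the paper gives no proof at all for this proposition, simply citing \cite[Theorem~7.7]{Cox} in the heading and referring the reader to \cite{Cox}*{\S 7} for further details. Your sketch of the argument from Cox (well-definedness via the multiplier ring computation using primitivity, the inverse map via the norm form, and compatibility with Dirichlet composition) is accurate, but strictly more than what the paper does; your concluding remark that you would simply refer to \cite[Theorem~7.7]{Cox} is exactly what the paper does.
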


Further details about orders in quadratic imaginary fields may be found in \cite{Cox}*{\S 7}. Most importantly, we may still assign a specific class in $\mathrm{Cl}_K$ to a quadratic form via the next proposition.

\begin{proposition}[see \cite{Cox}, proof of Theorem 7.24 on pp. 132--134]
    Let $K$ be a number field and $\mathcal{O}$ an order in $K$ of discriminant $D$. The projection
    \[\mathrm{pr}_D: \mathrm{Cl}(\mathcal{O}) \longrightarrow \mathrm{Cl}_K, \quad [\mathfrak{a}] \mapsto [\mathfrak{a}\mathcal{O}_K]\]
    is well-defined.
\end{proposition}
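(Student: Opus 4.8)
The plan is to read $\mathrm{pr}_D$ as the map induced on ideal classes by the ideal-level assignment $\mathfrak{a}\mapsto\mathfrak{a}\mathcal{O}_K$, and to check the two things this requires: (i) that $\mathfrak{a}\mathcal{O}_K$ is a genuine nonzero fractional $\mathcal{O}_K$-ideal, so that $[\mathfrak{a}\mathcal{O}_K]$ really names an element of $\mathrm{Cl}_K$; and (ii) that this element depends only on the class $[\mathfrak{a}]\in\mathrm{Cl}(\mathcal{O})$ and not on the chosen representative. A short third step records that $\mathrm{pr}_D$ is then automatically a group homomorphism.

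For (i), write $\mathfrak{a}=\langle\alpha_1,\dots,\alpha_n\rangle_{\mathcal{O}}$ (fractional $\mathcal{O}$-ideals are finitely generated $\mathcal{O}$-submodules of $K$ by definition) and pick $c\in\mathcal{O}\setminus\{0\}$ with $c\mathfrak{a}\subseteq\mathcal{O}$. Extending scalars along $\mathcal{O}\subseteq\mathcal{O}_K$ gives $\mathfrak{a}\mathcal{O}_K=\langle\alpha_1,\dots,\alpha_n\rangle_{\mathcal{O}_K}$, a finitely generated $\mathcal{O}_K$-module; it is nonzero since it contains $\mathfrak{a}\neq\{0\}$, and $c\mathfrak{a}\subseteq\mathcal{O}\subseteq\mathcal{O}_K$ gives $c\cdot(\mathfrak{a}\mathcal{O}_K)\subseteq\mathcal{O}_K$. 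Hence $\mathfrak{a}\mathcal{O}_K$ is a nonzero fractional $\mathcal{O}_K$-ideal, and since $\mathcal{O}_K$ is a Dedekind domain every such ideal is invertible, so $[\mathfrak{a}\mathcal{O}_K]\in\mathrm{Cl}_K$ is well-defined. (Note that invertibility of $\mathfrak{a}$ as an $\mathcal{O}$-ideal plays no role here; it is only present to make the source $\mathrm{Cl}(\mathcal{O})$ a group.)

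For (ii), suppose $[\mathfrak{a}]=[\mathfrak{b}]$ in $\mathrm{Cl}(\mathcal{O})$, i.e. $\mathfrak{a}=\lambda\mathfrak{b}$ for some $\lambda\in K^\times$. Extension of ideals commutes with multiplication by elements of $K^\times$, so $\mathfrak{a}\mathcal{O}_K=\lambda\,(\mathfrak{b}\mathcal{O}_K)$, whence $[\mathfrak{a}\mathcal{O}_K]=[\mathfrak{b}\mathcal{O}_K]$ in $\mathrm{Cl}_K$; this is exactly the asserted well-definedness. The homomorphism property then follows from $(\mathfrak{a}\mathfrak{b})\mathcal{O}_K=(\mathfrak{a}\mathcal{O}_K)(\mathfrak{b}\mathcal{O}_K)$ (using $\mathcal{O}_K\mathcal{O}_K=\mathcal{O}_K$), so in particular $\mathrm{pr}_D$ respects inverses.

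There is essentially no real obstacle: the whole content is the routine bookkeeping in step (i) that $\mathfrak{a}\mathcal{O}_K$ remains finitely generated and fractional, which is immediate from $\mathcal{O}_K$ being Noetherian. The one subtle point — and the reason Cox instead argues through ideals coprime to the conductor $f=[\mathcal{O}_K:\mathcal{O}]$ — is that the \emph{deeper} statements about $\mathrm{pr}_D$ (its surjectivity, and its compatibility with the reduction theory of quadratic forms via $\Xi_D$ from Proposition \ref{prop:Cox-prop-non-fundamentalDiscriminant}) really do require first replacing $\mathfrak{a}$ by a representative prime to $f$, on which $\mathfrak{a}\mapsto\mathfrak{a}\mathcal{O}_K$ becomes a bijection onto the $\mathcal{O}_K$-ideals prime to $f$. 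For the bare well-definedness claimed here that machinery is unnecessary; alternatively one may simply cite \cite{Cox}, pp.~132--134.
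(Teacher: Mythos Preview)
Your argument is correct and its core step (ii) is exactly the paper's proof: if $\mathfrak{a}_1$ and $\mathfrak{a}_2$ differ by a principal fractional $\mathcal{O}$-ideal $\mathfrak{b}$, then since $\mathcal{O}\subset\mathcal{O}_K$ the extension $\mathfrak{b}\mathcal{O}_K$ is principal in $\mathcal{O}_K$, whence $[\mathfrak{a}_1\mathcal{O}_K]=[\mathfrak{a}_2\mathcal{O}_K]$. The paper's proof omits your step (i) and the homomorphism remark, so yours is strictly more detailed but follows the same route.
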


\begin{proof}
    Let $\mathfrak{a}_1, \mathfrak{a}_2$ be two invertible $\mathcal{O}$-ideals lying in the same class of $\mathrm{Cl}(\mathcal{O})$. Then there is an invertible principal fractional $\mathcal{O}$-ideal $\mathfrak{b}$ such that $\mathfrak{a}_1\mathfrak{b} = \mathfrak{a}_2$. Since $\mathcal{O} \subset \mathcal{O}_K$, $\mathfrak{b}\mathcal{O}_K$ is a principal fractional ideal of $K$, so $[\mathfrak{a}_1\mathcal{O}_K] = [\mathfrak{a}_2\mathcal{O}_K]$ in $\mathrm{Cl}_K$.
\end{proof}

We now provide the following generalization of Proposition \ref{prop:XiEqualsReductionFundamentalDiscriminant}.

\begin{proposition}
    Let $\tau = \frac{a}{b} \in K \cap \HH$ have discriminant $D = f^2d_K$, then $\mathrm{pr}_D\left(\left[\Xi_{D}(Q_\tau)\right]\right) = \left[\left< a,b\right>\right]$.
\end{proposition}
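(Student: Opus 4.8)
The plan is to reduce the general statement, for an arbitrary order $\mathcal{O}$ of discriminant $D = f^2 d_K$, to the already-established fundamental-discriminant case in Proposition \ref{prop:XiEqualsReductionFundamentalDiscriminant}, by carefully tracking how the two assignments $\Xi_D$ and $\frac{a}{b} \mapsto \langle a, b\rangle$ behave under the projection $\mathrm{pr}_D$. First I would write $\tau = \frac{-B + \sqrt{D}}{2A}$ with $Q_\tau(x,y) = Ax^2 + Bxy + Cy^2$ primitive and $B^2 - 4AC = D$, so that by Proposition \ref{prop:Cox-prop-non-fundamentalDiscriminant}(1) we have $\Xi_D(Q_\tau) = \langle A, \frac{-B+\sqrt{D}}{2}\rangle_\Z$ as an invertible $\mathcal{O}$-ideal, and hence $\mathrm{pr}_D([\Xi_D(Q_\tau)]) = [\langle A, \frac{-B+\sqrt{D}}{2}\rangle_\Z \mathcal{O}_K]$ in $\mathrm{Cl}_K$.

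The key step is then to identify this $\mathcal{O}_K$-ideal. By the remark following Proposition \ref{prop:ClassesOfFrations}, the class $[\langle a, b\rangle]$ is independent of the choice of $a, b$ representing $\tau$, so I may take $a = -B + \sqrt{D}$ and $b = 2A$; then $\langle a, b\rangle = \langle -B + \sqrt{D}, 2A\rangle$ as an $\mathcal{O}_K$-ideal. It remains to show $\langle -B + \sqrt{D}, 2A\rangle_{\mathcal{O}_K} = \langle A, \frac{-B+\sqrt{D}}{2}\rangle_\Z \cdot \mathcal{O}_K$; equivalently, that the $\mathcal{O}_K$-ideal generated by $A$ and $\frac{-B+\sqrt{D}}{2}$ equals the one generated by $2A$ and $-B + \sqrt{D}$ (note $\frac{-B+\sqrt D}{2} = \frac{a}{2}$ and $A = \frac{b}{2}$, so up to the factor $2$ these are the same generators). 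Since $\sqrt{D} = f\sqrt{d_K}$ and $\mathcal{O}_K = \langle 1, \omega_K\rangle_\Z$ with $2\omega_K \in \{\sqrt{d_K}, 1+\sqrt{d_K}\}$, one checks directly that multiplying the $\Z$-generators of $\langle A, \frac{-B+\sqrt D}{2}\rangle_\Z$ by $\mathcal{O}_K$ absorbs the factor-of-$2$ discrepancy: concretely, $\frac{-B+\sqrt D}{2} \cdot \frac{1}{A} \cdot 2A = -B+\sqrt D$ lies in the $\mathcal{O}_K$-ideal, and conversely $\frac{1}{2}(-B+\sqrt D) = \frac{-B+\sqrt D}{2}$ is an $\mathcal{O}$-element (by Proposition \ref{prop:Cox-prop-non-fundamentalDiscriminant}(1), $\mathcal{O} = \langle 1, \frac{-B+\sqrt D}{2}\rangle_\Z$) hence an $\mathcal{O}_K$-element, so both generating sets produce the same $\mathcal{O}_K$-ideal. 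Having matched the ideals, $\mathrm{pr}_D([\Xi_D(Q_\tau)]) = [\langle a, b\rangle]$ follows.

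Alternatively — and perhaps more cleanly — one can argue functorially: the isomorphism $\Xi_D$ of Proposition \ref{prop:Cox-prop-non-fundamentalDiscriminant}(2) is compatible with $\Xi_{d_K}$ via $\mathrm{pr}_D$ and the natural surjection $C(D) \to C(d_K)$ on form class groups (this is part of the theory in \cite{Cox}*{\S 7}), so $\mathrm{pr}_D([\Xi_D(Q_\tau)]) = [\Xi_{d_K}(Q_{\tau'})]$ where $Q_{\tau'}$ is the form of discriminant $d_K$ corresponding to $\tau' = \frac{a}{b}$ viewed with its fundamental discriminant; then Proposition \ref{prop:XiEqualsReductionFundamentalDiscriminant} gives $[\Xi_{d_K}(Q_{\tau'})] = [\langle a,b\rangle]$ directly. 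The main obstacle in either route is the bookkeeping around the factor $f$ and the factor $2$: one must be scrupulous about whether generators like $\frac{-B+\sqrt D}{2}$ and $A$ are taken as $\Z$-, $\mathcal{O}$-, or $\mathcal{O}_K$-generators, and verify that passing to $\mathcal{O}_K$ precisely collapses the difference between $C(D)$ and $C(d_K)$. Once that identification is nailed down, the result is immediate from the cited propositions.
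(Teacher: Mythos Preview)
Your overall strategy is sound and close to the paper's, but the central computational claim is false. You assert that the $\mathcal{O}_K$-ideals $\langle A,\tfrac{-B+\sqrt D}{2}\rangle_{\mathcal{O}_K}$ and $\langle 2A,-B+\sqrt D\rangle_{\mathcal{O}_K}$ are \emph{equal}; they are not. Take $K=\Q(\sqrt{-5})$, $D=d_K=-20$, $A=1$, $B=0$, $C=5$: then the first ideal is $\langle 1,\sqrt{-5}\rangle_{\mathcal{O}_K}=\mathcal{O}_K$, while the second is $\langle 2,2\sqrt{-5}\rangle_{\mathcal{O}_K}=2\mathcal{O}_K$. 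Your justification for the reverse inclusion is also flawed: observing that $\tfrac{-B+\sqrt D}{2}\in\mathcal{O}_K$ says only that it lies in the ring, not that it lies in the proper ideal $\langle 2A,-B+\sqrt D\rangle_{\mathcal{O}_K}$. What is true in general is that, since $\tfrac{-B+\sqrt D}{2}\in\mathcal{O}\subseteq\mathcal{O}_K$, one has
\[
\langle 2A,\,-B+\sqrt D\rangle_{\mathcal{O}_K}=\bigl\langle 2A,\,2\cdot\tfrac{-B+\sqrt D}{2}\bigr\rangle_{\mathcal{O}_K}=2\,\bigl\langle A,\tfrac{-B+\sqrt D}{2}\bigr\rangle_{\mathcal{O}_K},
\]
so the two ideals differ by the principal factor $\langle 2\rangle$ and hence lie in the same class of $\mathrm{Cl}_K$. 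That is all you need, and with this correction your first route goes through.

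The paper organizes the argument slightly differently and thereby sidesteps the factor-of-$2$ bookkeeping altogether: it first carries the proof of Proposition~\ref{prop:XiEqualsReductionFundamentalDiscriminant} over verbatim to the order $\mathcal{O}$ (using Proposition~\ref{prop:Cox-prop-non-fundamentalDiscriminant}(1) in place of Proposition~\ref{prop:Cox-prop-fundamentalDiscriminant}(1) and the analogue of Proposition~\ref{prop:ClassesOfFrations} for $\mathcal{O}$) to obtain $[\Xi_D(Q_\tau)]=[\langle a,b\rangle_{\mathcal{O}}]$ already in $\mathrm{Cl}(\mathcal{O})$, and only then applies $\mathrm{pr}_D$, which by definition sends $[\langle a,b\rangle_{\mathcal{O}}]$ to $[\langle a,b\rangle_{\mathcal{O}_K}]$. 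Your ``functorial'' alternative is also viable, but note that the element $\tau$ has discriminant $D$, not $d_K$, so the form $Q_{\tau'}$ of discriminant $d_K$ you invoke does not have root $\tau$; making that route precise requires spelling out the map $C(D)\to C(d_K)$ on the level of forms and checking its compatibility with $\mathrm{pr}_D\circ\Xi_D$, which is more citation than argument.
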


\begin{proof}
    Generalizing Proposition \ref{prop:ClassesOfFrations} to arbitrary orders and using a similar argument as in Proposition \ref{prop:XiEqualsReductionFundamentalDiscriminant} due to Proposition \ref{prop:Cox-prop-non-fundamentalDiscriminant} (1), it follows that
    \[\mathrm{pr}_D\left(\left[\Xi_{D}(Q_\tau)\right]\right) = \mathrm{pr}_D\left(\left[\langle a, b \rangle_\mathcal{O} \right]\right) = \left[\left< a,b\right>\right]\]
    yielding the claim.
\end{proof}

In summary, we have seen that the map referred to in the remark after Proposition \ref{prop:InvarianceModTransf} harmonizes with the correspondence of quadratic forms with ideals. 

\begin{thebibliography}{99}


\bibitem{bru123}  J. H. Bruinier, G. van der Geer, G. Harder, D. Zagier, {\it The 1-2-3 of modular forms}, Universitext, Lectures from the Summer School on Modular Forms and their Applications held in Nordfjordeid, June 2004; Edited by Kristian Ranestad, Springer, Berlin, 2008.
   
\bibitem{Cohen} H. Cohen, {\it Number Theory, Volume I, Tools and Diophantine Equations}, Springer, 2007.

\bibitem{Cox} D. A. Cox, {\it Primes of the Form $x^2 + ny^2$}, Wiley, Second Edition, 2013.

\bibitem{Harman} G. Harman, {\it Metric Number Theory}, London Mathematical Society Monographs New Series, 1998.

\bibitem{Jarvis} F. Jarvis, {\it Algebraic Number Theory}, Springer, 2014.

\bibitem{Neukirch} J. Neukirch, {\it Algebraische Zahlentheorie}, Springer, 1995.

\bibitem{Palmer1} M. Palmer, \textit{The Duffin--Schaeffer Theorem in number fields}, Acta Arithmetica {\bf 196} (2020), 1-16.

\bibitem{Palmer2} M. Palmer, \textit{Vaaler's Theorem in number fields}, preprint. arXiv:2210.17064.


\bibitem{Tenenbaum} G. Tenenbaum, {\it Introduction to analytic and probabilistic number theory}, Graduate Studies in Mathematics, American Mathematical Society \textbf{163}, Third Edition, 2008. 




\end{thebibliography}
\end{document}